\newtheorem{thm}{Theorem}[section]
\newtheorem{prop}[thm]{Proposition}
\newtheorem{lemma}[thm]{Lemma}
\newtheorem{convention}[thm]{Convention}
\theoremstyle{definition}
\newtheorem*{defin}{Definition}
\newtheorem{notation}[thm]{Notation}
\newtheorem*{notaIntr}{Notation}
\theoremstyle{remark}
\newtheorem{remark}[thm]{Remark}
\def\ifff{if and only if }
\def\letsM{Let us fix a $(*)$-elementary submodel $M$}
\def\letsMX{Let us fix a $(*)$-elementary submodel $M$ containing $X$ }
\def\pp{For any suitable elementary submodel $M$ the following holds:}
\def \rng {\operatorname{Rng}}
\def \dom {\operatorname{Dom}}
\def\en{\mathbb N}
\def\zet{\mathbb Z}
\def\er{\mathbb R}
\def\qe{\mathbb Q}
\def\ef{\mathcal F}
\def\iff{\leftrightarrow}
\def\eps{\varepsilon}  
\def\ov{\overline}
\begin{document}
\author{Marek C\'uth, Martin Rmoutil}
\title{$\sigma$-porosity is separably determined}
\thanks{M.C\'uth was supported by the Grant No. 282511/B-MAT/MFF of the Grant Agency of the Charles University in Prague. M. Rmoutil was supported by the Grant SVV-2011-263316.}
\email{cuthm5am@karlin.mff.cuni.cz, caj@rmail.cz}
\address{Charles University, Faculty of Mathematics and Physics, Sokolovsk\'a 83, 186 75 Praha 8 Karl\'{\i}n, Czech Republic}
\subjclass[2010]{28A05, 54E35, 58C20}
\keywords{Elementary submodel, separable reduction, porous set, $\sigma$-porous set}
\begin{abstract}
We prove a separable reduction theorem for $\sigma$-porosity of Suslin sets. In particular, if $A$ is a Suslin subset in a Banach space $X$, then each separable subspace of $X$ can be enlarged to a separable subspace $V$
such that $A$ is $\sigma$-porous in $X$ if and only if $A\cap V$ is $\sigma$-porous in $V$. Such a result is proved for several types of $\sigma$-porosity. The proof is done using the method of elementary submodels, hence the results can be combined with other separable reduction theorems. As an application we extend a theorem of
L.Zaj\'{\i}\v{c}ek on differentiability of Lipschitz functions on separable Asplund spaces to the nonseparable setting.
\end{abstract}
\maketitle
\section{Introduction}
The aim of this article is to obtain separable reduction theorems for some classes of $\sigma$-porous sets by employing the method of elementary submodels. This is a set-theoretical method which can be used in various branches of mathematics. A.~Dow in \cite{dow} illustrated the use of this method in topology, W.~Kubi\'s in \cite{kubis} used it in functional analysis, namely to construct projections on Banach spaces.

In this article we shall use the method of elementary submodels to prove Theorem \ref{tUpPorous} and Theorem \ref{tLoPorous} which have as a consequence for example the following:

\begin{thm}\label{tUpperPorous}
Let $\langle X,\left\| \cdot \right\|\rangle$ be a Banach space and let $A\subset X$ be a Suslin set. Then for every separable subspace $V_0\subset X$ there exists a closed separable space $V\subset X$ such that $V_0\subset V$ and
\begin{enumerate}[(i)] 
  \item $A$ is $\sigma$-upper porous \ifff $A\cap V$ is $\sigma$-upper porous in the space $V$,
  \item $A$ is $\sigma$-lower porous \ifff $A\cap V$ is $\sigma$-lower porous in the space $V$.
\end{enumerate}
\end{thm}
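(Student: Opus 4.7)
The plan is to deduce this theorem directly from the elementary-submodel reduction results (Theorems \ref{tUpPorous} and \ref{tLoPorous}) announced in the introduction. The idea is: pick a countable $(*)$-elementary submodel $M$ of a sufficiently large fragment of set theory which knows about $X$, about $A$, and about a countable dense subset of $V_0$; then the closed separable space $V:=\overline{X\cap M}$ contains $V_0$ and automatically satisfies the required equivalences.

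More concretely, I would first fix a countable $D_0\subset V_0$ with $\overline{D_0}=V_0$ and, by the standard downward L\"owenheim--Skolem construction, obtain a countable $(*)$-elementary submodel $M$ of some $H(\theta)$ with $\theta$ large enough, with $X,A,D_0\in M$. Setting $V:=\overline{X\cap M}$, the inclusion $D_0\subset M$ together with $D_0\subset X$ forces $D_0\subset X\cap M\subset V$, hence $V_0=\overline{D_0}\subset V$; countability of $M$ forces separability, and $V$ is closed by construction. It then remains only to invoke Theorem \ref{tUpPorous} and Theorem \ref{tLoPorous} with this $M$ to conclude that $\sigma$-upper (resp.\ $\sigma$-lower) porosity of $A$ in $X$ is equivalent to $\sigma$-upper (resp.\ $\sigma$-lower) porosity of $A\cap V$ in $V$.

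The genuinely difficult work lies not in this corollary but in the proofs of Theorems \ref{tUpPorous} and \ref{tLoPorous} themselves: one must show that $\sigma$-porosity of a Suslin set is ``absolute'' in a precise sense between $X$ and the reflected subspace $\overline{X\cap M}$. This is where both Suslinness of $A$ (to transfer a witnessing $\sigma$-decomposition back from $V$ to $X$) and $(*)$-elementarity of $M$ (to make sure all the first-order properties used in the definition of porosity are preserved) are crucial. By contrast, the present deduction is purely cosmetic: once one knows the general reduction holds for every ``suitable'' countable elementary submodel, accommodating a prescribed $V_0$ only requires adding one further parameter (a countable dense subset of $V_0$) to the list of objects that $M$ must contain, which the L\"owenheim--Skolem machinery handles without comment.
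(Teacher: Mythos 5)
Your proposal is correct and takes essentially the same route as the paper, which obtains this theorem by combining Theorem \ref{tUpPorous}, Theorem \ref{tLoPorous} and Proposition \ref{pModelBasisSpaces} (via Remark \ref{rCombine}), choosing a suitable countable elementary submodel $M$ whose required countable set includes a countable dense subset of $V_0$, so that $V_0\subset V:=X_M$. The only point you leave implicit is that $\overline{X\cap M}$ is a closed \emph{linear} subspace, which is exactly what Proposition \ref{pModelBasisSpaces}(ii) supplies.
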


As a consequence of Theorem \ref{tUpPorous} and [\ref{cuth}, Theorem 5.7] we get the following:

\begin{thm}\label{tDifPorous}
Let $X, Y$ be Banach spaces, $G\subset X$ an open subset and $f:G\to Y$ be a function. Then for every separable subspace $V_0\subset X$ there exists a closed separable space $V\subset X$ such that $V_0\subset V$ and that the following two conditions are equivalent:
\begin{enumerate}[(i)] 
  \item the set of the points where $f$ is not Fr\'echet differentiable is $\sigma$-upper porous,
  \item the set of the points where $f\upharpoonright V$ is not Fr\'echet differentiable is $\sigma$-upper porous in $V$.
\end{enumerate}
\end{thm}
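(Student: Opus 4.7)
The plan is to combine Theorem \ref{tUpPorous} with [\ref{cuth}, Theorem 5.7]. Let $A$ denote the set of $x\in G$ at which $f$ is not Fr\'echet differentiable, and, for a closed subspace $V\subset X$, let $A_V$ denote the set of $x\in G\cap V$ at which the restriction $f\upharpoonright V$ (viewed as a map on the Banach space $V$) fails to be Fr\'echet differentiable. The cited result [\ref{cuth}, Theorem 5.7] is a separable reduction theorem for Fr\'echet differentiability, which for sufficiently rich separable $V\subset X$ produces the equality $A_V = A\cap V$ together with enough descriptive control (presumably a Suslin/Borel representation) on this intersection; Theorem \ref{tUpPorous} is the analogous reduction for $\sigma$-upper porosity of Suslin sets.

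Both results are proved by the method of elementary submodels, so --- as emphasised in the abstract --- they combine in a single step. Concretely, I would fix a $(*)$-elementary submodel $M$ that contains $X$, $Y$, $G$, $f$, $V_0$ and enough auxiliary parameters so that $M$ simultaneously witnesses the reflection schemes required by Theorem \ref{tUpPorous} and by [\ref{cuth}, Theorem 5.7]; producing such an $M$ is a routine L\"owenheim--Skolem argument, and this modularity is precisely the feature advertised by the authors. Setting $V:=\overline{X\cap M}$ yields a closed separable subspace of $X$ with $V_0\subset V$ for which
\begin{enumerate}[(a)]
\item $A\cap V = A_V$, by the conclusion of [\ref{cuth}, Theorem 5.7] applied through $M$;
\item $A$ is $\sigma$-upper porous in $X$ if and only if $A\cap V$ is $\sigma$-upper porous in $V$, by the conclusion of Theorem \ref{tUpPorous} applied through the same $M$.
\end{enumerate}
Substituting (a) into (b) gives at once the equivalence of (i) and (ii).

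The main technical obstacle I anticipate is the Suslin hypothesis of Theorem \ref{tUpPorous}: since the statement of Theorem \ref{tDifPorous} imposes no continuity or measurability on $f$, the set $A$ need not itself be Suslin in $X$. I expect this to be resolved by the precise formulation of [\ref{cuth}, Theorem 5.7]: that result should guarantee, after enlarging $V_0$, that $A\cap V$ admits a Suslin (in fact Borel, as $V$ is separable) description within $V$, which is what Theorem \ref{tUpPorous} needs when invoked through the same elementary submodel. If the simultaneous application encounters any technical snag, the fallback is an iterative construction: build an increasing chain $V_0\subset V_1\subset V_2\subset\cdots$ of closed separable subspaces by alternately applying [\ref{cuth}, Theorem 5.7] and Theorem \ref{tUpPorous}, and set $V:=\overline{\bigcup_n V_n}$. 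Both reduction theorems are stable under such countable chain unions, so $V$ satisfies both (a) and (b) and the proof concludes as before.
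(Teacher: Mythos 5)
Your overall architecture is exactly the paper's: fix one suitable elementary submodel $M$ containing $X$, $Y$, $G$, $f$ and a countable dense subset of $V_0$, set $V:=\overline{X\cap M}$ (closed and separable by Proposition \ref{pModelBasisSpaces}), use [\ref{cuth}, Theorem 5.7] to get $A\cap V=A_V$ and Theorem \ref{tUpPorous} to transfer $\sigma$-upper porosity, combining the two through the same $M$ as licensed by Convention \ref{conventionM} and Remark \ref{rCombine}. The one genuine gap is the point you yourself flag but do not close: the Suslin hypothesis of Theorem \ref{tUpPorous}. Your proposed resolution would not work as stated, because Theorem \ref{tUpPorous} is invoked with $X$ as the ambient space and requires $A$ to be Suslin \emph{in $X$}; knowing only that $A\cap V$ admits a Suslin (or Borel) description inside the separable space $V$ gives you nothing to feed into that theorem, and [\ref{cuth}, Theorem 5.7] is a statement about differentiability points, not a descriptive-set-theoretic upgrade of $A$. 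The chain fallback $V_0\subset V_1\subset\cdots$ does not touch the Suslin issue either, and the asserted stability of both reduction theorems under countable increasing unions is established nowhere (avoiding precisely such iteration arguments is the point of the submodel method).

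The missing ingredient is the one the paper cites: for an \emph{arbitrary} function $f:G\to Y$ the set $D(f)$ of points of Fr\'echet differentiability is $F_{\sigma\delta}$ in $X$ (see [\ref{cuth}, Theorem 5.8]), so $A=G\setminus D(f)$ is Borel, hence Suslin, in $X$, and Theorem \ref{tUpPorous} applies to $A$ directly. With that single fact inserted, your first (non-fallback) argument is precisely the paper's proof.
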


The first result is in a certain sense an improvement of the result of J. Lindenstrauss, D.Preiss and J.Ti\v{s}er [\ref{tiser}, Corrolary 3.6.7], from where only one implication follows.
Moreover, we are able to easily extend results concerning points of non-differentiability from separable Banach spaces to the non-separable case. An example of such a result is Theorem \ref{tSepDual} which has been proved in the article \cite{5} -- the generalization is in Theorem \ref{tApplication}.


Let us recall the most relevant notions, definitions and notations:

\begin{notaIntr}
We denote by $\omega$ the set of all natural numbers (including $0$), by $\en$ the set $\omega\setminus\{0\}$, by $\er_+$ the interval $(0,\infty)$ and $\qe_+$ stands for $\er_+\cap\qe$. Whenever we say that a set is countable, we mean that the set is either finite or infinite and countable. If $f$ is a mapping then we denote by $\rng f$ the range of $f$ and by $\dom f$ the domain of $f$. By writing $f:X\to Y$ we mean that $f$ is a mapping with $\dom f = X$ and $\rng f \subset Y$. By the symbol $f\!\upharpoonright_{Z}$ we denote the restriction of the mapping $f$ to the set $Z$.

If $\langle X,\rho\rangle$ is a metric space, we denote by $U(x,r)$ the open ball (i.e. the set $\{y\in X: \rho(x,y) < r\}$) and by $d(x,A)$ the distance function from a set $A\subset X$ (i.e. $d(x,A)=\inf\{\rho(x,a); a\in A\}$). We shall consider normed linear spaces over the field of real numbers (but many results hold for complex spaces as well). If $X$ is a normed linear space, $X^*$ stands for the (continuous) dual space of $X$.
\end{notaIntr}

\section{Elementary Submodels}

The method of elementary submodels enables us to find specific separable subspaces (of Banach spaces) which can be used for proofs of separable reduction theorems. In this section we briefly describe this method and recall some basic notions. More information can be found in \cite{cuth} where this method is described in greater detail.

First, let us recall some definitions:

Let $N$ be a fixed set and $\phi$ a formula in the language of $ZFC$. Then the {\em relativization of $\phi$ to $N$} is the formula $\phi^N$ which is obtained from $\phi$ by replacing each quantifier of the form ``$\forall x$'' by ``$\forall x\in N$'' and each quantifier of the form ``$\exists x$'' by ``$\exists x\in N$''.

For example, if
$$\phi = \forall x\; \forall y\; \exists z\; ((x\in z) \;\wedge\; (y\in z)) $$
and $N=\{a,b\}$, then the relativization of $\phi$ to $N$ is
$$\phi^N = \forall x\in N\; \forall y\in N\; \exists z\in N\; ((x\in z)\; \wedge\; (y\in z)).$$
It is clear that $\phi$ is satisfied, but $\phi^N$ is not.

If $\phi(x_1,\ldots,x_n)$ is a formula with all free variables shown (i.e. a formula whose free variables are exactly $x_1,\ldots,x_n$) then {\em $\phi$ is absolute for $N$} if and only if
$$\forall a_1,\ldots,a_n\in N\quad (\phi^N(a_1,\ldots,a_n) \leftrightarrow \phi(a_1,\ldots,a_n)).$$

The method is based mainly on the following set-theoretical theorem (a proof can be found in [\ref{kunen}, Chapter IV, Theorem 7.8]).

\begin{thm}\label{tCountModel}
 Let $\phi_1,\ldots,\phi_n$ be any formulas and $X$ any set. Then there exists a set $M\supset X$ such, that
 $$(\phi_1,\ldots,\phi_n \text{ are absolute for }M)\quad \wedge\quad (|M|\leq \max(\omega,|X|)).$$ 
\end{thm}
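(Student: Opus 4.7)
The plan is to combine the Reflection Principle of ZFC with a downward L\"owenheim--Skolem argument. First I would invoke the Reflection Principle to find an ordinal $\alpha$ large enough that $X\subset V_\alpha$ and each of $\phi_1,\ldots,\phi_n$ is absolute between $V_\alpha$ and the full universe $V$. This step replaces the proper-class universe by an honest set-sized structure, which is essential for the Skolem-hull construction that follows.

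Next I would list all subformulas of $\phi_1,\ldots,\phi_n$ of the form $\exists x\,\psi(x,y_1,\ldots,y_k)$ and, for each such subformula, use the axiom of choice to select a Skolem function $F_\psi:V_\alpha^k\to V_\alpha$ such that $F_\psi(a_1,\ldots,a_k)$ is a witness to $\exists x\,\psi(x,a_1,\ldots,a_k)$ in $V_\alpha$ whenever a witness exists (and an arbitrarily fixed element of $V_\alpha$ otherwise). Only finitely many such functions are required, since $\phi_1,\ldots,\phi_n$ have only finitely many subformulas in total.

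Then I would build $M$ as the Skolem hull of $X$: set $M_0:=X$, inductively $M_{k+1}:=M_k\cup\bigcup_\psi F_\psi[M_k^{k_\psi}]$, and finally $M:=\bigcup_{k\in\omega}M_k$. A routine cardinality calculation gives $|M_k|\le\max(\omega,|X|)$ at each stage, hence $|M|\le\max(\omega,|X|)$. Closedness of $M$ under every Skolem function means, by the Tarski--Vaught test applied inductively on formula complexity, that each $\phi_i$ (together with all its subformulas) is absolute between $M$ and $V_\alpha$; combined with the reflection step this yields absoluteness between $M$ and $V$, as required.

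The main obstacle is really the first step: without the Reflection Principle the Skolem functions would have to be defined on the proper class $V$, which is awkward to formalize directly in ZFC. Once the problem has been reduced to a set-sized ambient structure the remaining construction is classical; the only other point demanding some care is verifying the Tarski--Vaught criterion not merely for the $\phi_i$ themselves but for all their existentially quantified subformulas, which is precisely why one must close under Skolem functions for every such subformula rather than only for the top-level quantifiers.
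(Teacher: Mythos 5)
Your proposal is correct and is essentially the standard argument that the paper itself relies on: it cites [\ref{kunen}, Chapter IV, Theorem 7.8], whose proof is exactly this combination of the Reflection Principle (to obtain a $V_\alpha$ with $X\subset V_\alpha$ reflecting the subformula-closed list) followed by a downward L\"owenheim--Skolem/Skolem-hull construction with the Tarski--Vaught test. No substantive differences to report.
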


Since the previous theorem will often be used throughout the paper, the following notation is useful.

\begin{defin}
 Let $\phi_1,\ldots,\phi_n$ be any formulas and let $X$ be any countable set. Let $M\supset X$ be a countable set satisfying that $\phi_1,\ldots,\phi_n$ are absolute for $M$. Then we say that {\em $M$ is an elementary submodel for $\phi_1,\ldots,\phi_n$ containing $X$}. This is denoted by $M\prec(\phi_1,...,\phi_n;\; X)$.
\end{defin}

Let $\phi(x_1,\ldots,x_n)$ be a formula with all free variables shown and let $M$ be some elementary submodel for $\phi$. To use the absoluteness of $\phi$ for $M$ efficiently, we need to know that many sets are elements of $M$. The reason is that for $a_1,\ldots,a_n\in M$ we have $\phi(a_1,\ldots,a_n)$ if and only if $\phi^M(a_1,\ldots,a_n)$.  Using the following lemma we can force the elementary submodel $M$ to contain all the required objects created (uniquely) from elements of $M$ (for a proof see [\ref{cuth}, Lemma 2.5]).

\begin{lemma}\label{lUniqueInM}
Let $\phi(y,x_1,\ldots,x_n)$ be a formula with all free variables shown and let $X$ be a countable set. Let $M$ be a fixed set, $M\prec(\phi, \exists y\;\phi(y,x_1,\ldots,x_n);\; X)$ and let $a_1,\ldots,a_n \in M$ be such that there exists only one set $u$ satisfying $\phi(u,a_1,\ldots,a_n)$. Then $u\in M$.
\end{lemma}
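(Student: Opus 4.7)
The plan is to unpack the absoluteness assumptions of the two formulas $\phi$ and $\exists y\;\phi(y,x_1,\ldots,x_n)$ applied to the parameters $a_1,\ldots,a_n$, and then use the uniqueness of $u$ to force the witness to lie inside $M$.

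First I would observe that since $u$ is the (unique) set satisfying $\phi(u,a_1,\ldots,a_n)$, the sentence $\exists y\;\phi(y,a_1,\ldots,a_n)$ is true in the usual universe. Next, because $a_1,\ldots,a_n\in M$ and $\exists y\;\phi(y,x_1,\ldots,x_n)$ is one of the formulas for which $M$ is an elementary submodel, absoluteness gives the relativized version $(\exists y\;\phi(y,a_1,\ldots,a_n))^M$. Unfolding what this relativization means, there exists some $y_0\in M$ for which $\phi^M(y_0,a_1,\ldots,a_n)$ holds.

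Now I would invoke absoluteness of $\phi$ itself for $M$: since $y_0,a_1,\ldots,a_n$ are all elements of $M$, we can pass from $\phi^M(y_0,a_1,\ldots,a_n)$ back to $\phi(y_0,a_1,\ldots,a_n)$ in the full universe. At this point the uniqueness hypothesis on $u$ closes the argument: both $u$ and $y_0$ satisfy $\phi(\cdot,a_1,\ldots,a_n)$, so $y_0=u$, and therefore $u=y_0\in M$.

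There is no real obstacle here; the argument is essentially a bookkeeping exercise with relativization. The one subtlety worth flagging is that we genuinely need \emph{both} formulas $\phi$ and $\exists y\;\phi$ in the list for which $M$ is elementary: the first lets us pull the witness back out of $M$ into a true instance of $\phi$, while the second is what provides a witness inside $M$ in the first place. Uniqueness is what forces that witness to coincide with $u$, which is the only place the hypothesis ``there exists only one set $u$'' is used.
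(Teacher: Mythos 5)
Your argument is correct and is the standard proof of this fact: the paper itself does not prove the lemma but defers to [\ref{cuth}, Lemma 2.5], and the argument given there is exactly this two-step use of absoluteness (first of $\exists y\,\phi$ to produce a witness $y_0\in M$, then of $\phi$ to transfer $\phi^M(y_0,a_1,\ldots,a_n)$ back to the universe) followed by uniqueness to conclude $y_0=u\in M$. Nothing is missing.
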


It would be very laborious and pointless to use only the basic language of the set theory. For example, we often write $x < y$ and we know that this is in fact a shortcut for the formula $\varphi(x,y,<)$ with all free variables shown. Therefore, in the following text we use this extended language of the set theory as we are used to. We shall also use the following convention.
\begin{convention} \label{conventionM}
 Whenever we say\\[8pt]
 {\em for any suitable elementary submodel $M$ (the following holds...)},\\[8 pt]
 we mean that\\[8pt]
 {\em there exists a list of formulas $\phi_1,\ldots,\phi_n$ and a countable set $Y$ such that for every $M\prec(\phi_1,\ldots,\phi_n;\;Y)$ (the following holds...)}.
\end{convention}

By using this new terminology we lose the information about the formulas $\phi_1,\ldots,\phi_n$ and the set $Y$. This is, however, not important in applications.

\begin{remark}\label{rCombine}
 Let us have sentences $T_1(a),\ldots,T_n(a)$. Assume that whenever an $i\in \{1,\ldots,n\}$ is given, then for any suitable elementary submodel $M_i$ the sentence $T_i(M_i)$ is satisfied. Then it is easy to verify that for any suitable model $M$ the sentence $$T_1(M) \;\wedge\; \ldots \;\wedge\; T_n(M)$$ is satisfied (it suffices to combine all the lists of formulas and all the sets from the definition above).
 
 In other words, we are able to combine any finite number of results we have proved using the technique of elementary submodels. This includes all the theorems starting with ``\pp''
\end{remark}

Let us recall several more results about suitable elementary submodels (proofs can be found in [\ref{cuth}, Chapters 2 and 3]):

\begin{prop}\label{pModelBasis}\pp
  \begin{enumerate}
    \item[(i)] If $A,B\in M$, then $A\cap B\in M$, $B\setminus A\in M$ and $A\cup B\in M$.
    \item[(ii)] Let $f$ be a function such that $f\in M$. Then $\dom{f} \in M$, $\rng{f} \in M$ and for every $x\in \dom{f}\cap M$, $f(x)\in M$.
    \item[(iii)] Let $S$ be a finite set. Then $S\in M$ \ifff $S\subset M$.
    \item[(iv)] Let $S\in M$ be a countable set. Then $S\subset M$.
    \item[(v)] For every natural number $n>0$ and for arbitrary $(n+1)$ sets  $a_0,\ldots,a_n$ it is true,that
				$$a_0,\ldots,a_n\in M \iff \langle a_0,\ldots,a_n\rangle\in M.$$
  \end{enumerate}  
\end{prop}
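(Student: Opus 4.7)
The plan is to derive every clause by one of two routine moves: either apply Lemma~\ref{lUniqueInM} whenever the object in question is uniquely defined from parameters already in $M$, or invoke absoluteness of an existential formula in $M$ directly. The finitely many formulas and the countable set needed along the way are absorbed into the single notion of \emph{suitable $M$} via Remark~\ref{rCombine}, so we do not track them.

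Parts (i), (ii), (v), and the backward direction of (iii) fall under the first move. In (i) each of $A\cap B$, $A\cup B$, $B\setminus A$ is the unique $u$ characterized by $\forall x\,(x\in u\leftrightarrow \psi(x,A,B))$ for the obvious propositional $\psi$. In (ii), $\dom f$ and $\rng f$ are uniquely determined by $f$; and if $x\in\dom f\cap M$, then $f(x)$ is the unique $y$ with $\langle x,y\rangle\in f$. In (v), the Kuratowski tuple $\langle a_0,\ldots,a_n\rangle$ is uniquely determined from $a_0,\ldots,a_n$ by induction on $n$, and each coordinate is uniquely recoverable from the tuple. In the backward direction of (iii), a finite $S\subset M$ is built up from singletons $\{s\}$ (each the unique $u$ with $\forall x\,(x\in u\leftrightarrow x=s)$) by repeated binary union, using (i). Lemma~\ref{lUniqueInM} then places the result in $M$ in every case.

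The substantive clause is (iv), which also implies the forward direction of (iii) since finite sets are countable. First, every $n\in\omega$ is the unique set realizing its finite-ordinal definition, so $n\in M$; hence $\omega$, which is the unique $u$ satisfying ``$u$ is the smallest inductive set'', belongs to $M$, and moreover $\omega\subset M$. Now let $S\in M$ be countable. The sentence ``$\exists f\ (f$ is a surjection from $\omega$ onto $S)$'' is true in the universe and, by absoluteness for a suitable $M$, is true in $M$; therefore there exists such an $f$ with $f\in M$. By part (ii), $f(n)\in M$ for every $n\in\omega=\dom f\subset M$, whence $S=\rng f\subset M$.

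The only subtle point is precisely the step in (iv) where we pass from the universe to $M$ for the existential ``there is a surjection $\omega\to S$''. Such an $f$ is not uniquely determined by $S$, so Lemma~\ref{lUniqueInM} does not apply; we must explicitly insist that this particular existential formula be on the list of $\phi_i$'s defining our suitable $M$. Convention~\ref{conventionM} permits this silently, which is why the entire proposition reduces to the two mechanical moves above.
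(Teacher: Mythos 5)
The paper does not actually prove this proposition; it only points to [\ref{cuth}, Chapters 2 and 3], so there is no in-paper argument to compare against. Your proof is the standard one and is essentially correct: parts (i), (ii), (v) and the backward implication of (iii) follow from Lemma~\ref{lUniqueInM} applied to uniquely defined objects, and part (iv) follows from absoluteness of the existential formula asserting a surjection of $\omega$ onto $S$, combined with part (ii). You also correctly isolate the one genuinely non-mechanical point, namely that the surjection is not unique, so the existential formula itself must be placed on the list of formulas for which $M$ is absolute; Convention~\ref{conventionM} and Remark~\ref{rCombine} are exactly the devices that license this.

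One step deserves more care. The claim that ``every $n\in\omega$ is the unique set realizing its finite-ordinal definition'' would, read literally, require a separate defining formula for each $n$, whereas $M$ is absolute for only finitely many formulas. The correct route is inductive: $\emptyset\in M$ because it is the unique $u$ with $\forall x\,(x\notin u)$, and for $n\in M$ the successor $n\cup\{n\}$ is the unique $u$ with $\forall x\,(x\in u\leftrightarrow(x\in n\vee x=n))$; these two formulas give $\omega\subset M$ by induction on $n$. The analogous issue arises in (iii) and (v) because the finite cardinality and the tuple length are unbounded, but there your build-up of finite sets from singletons by repeated binary union, and of $(n+1)$-tuples as iterated Kuratowski pairs, is precisely the device that keeps the formula list finite, so those clauses are fine as written.
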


\begin{notation} 
\mbox{}
\begin{itemize}
\item If $A$ is a set, then by saying that an elementary model $M$ contains $A$ we mean that $A\in M$.
\item If $\langle X,\rho\rangle$ is a metric space (resp. $\langle X, +, \cdot, \left\| \cdot \right\|\rangle$ is a normed linear space) and $M$ an elementary submodel, then by saying {\em $M$ contains $X$} (or by writing $X\in M$) we mean that $\langle X,\rho\rangle\in M$ (resp. $ \langle X, +, \cdot, \left\| \cdot \right\|\rangle\in M$). 
\item If $X$ is a topological space and $M$ an elementary submodel, then we denote by $X_M$ the set $\ov{X\cap M}$.
\end{itemize}
\end{notation}

\begin{prop}\label{pModelBasisSpaces}\pp
  \begin{enumerate}
    \item[(i)] If $X$ is a metric space then whenever $M$ contains $X$, it is true that 
    $$\forall r\in\er_+\cap M\;\forall x\in X\cap M\quad U(x,r)\in M.$$
    \item[(ii)] If $X$ is a normed linear space then whenever $M$ contains $X$, it is true that 
    $$X_M\text{ is closed separable suspace of }X.$$
  \end{enumerate}  
\end{prop}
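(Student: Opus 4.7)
My plan is to prove (i) by a one-line application of Lemma \ref{lUniqueInM}, and to prove (ii) by checking the three defining properties (closedness, separability, and the linear subspace condition) in turn.

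For (i), I would add to the suitable list of formulas the formula
$$\phi(u,x,r,\rho) \equiv u = \{y : y\in\dom\rho \;\wedge\; \rho(x,y)<r\}$$
together with its existential closure. The hypothesis ``$M$ contains $X$'' means $\langle X,\rho\rangle\in M$, so Proposition \ref{pModelBasis}(v) yields $\rho\in M$. For any $x\in X\cap M$ and $r\in\er_+\cap M$, the set $U(x,r)$ is the unique $u$ satisfying $\phi(u,x,r,\rho)$, so Lemma \ref{lUniqueInM} places it in $M$.

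For (ii), closedness is immediate from the definition $X_M=\ov{X\cap M}$, and separability follows because $M$ is countable, hence $X\cap M$ is countable and its closure is separable. The real content is that $X_M$ is a linear subspace, and I would prove this in two stages. In the first stage I show $X\cap M$ is already closed under addition and under scalar multiplication by rationals. The hypothesis $\langle X,+,\cdot,\|\cdot\|\rangle\in M$ together with Proposition \ref{pModelBasis}(v) puts the operations $+$ and $\cdot$ in $M$. Adding $\qe$ to the seed set $Y$ of Convention \ref{conventionM} and applying Proposition \ref{pModelBasis}(iv) gives $\qe\subset M$; similarly, Lemma \ref{lUniqueInM} applied to the uniqueness formula for the zero vector secures $0\in M$. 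Then Proposition \ref{pModelBasis}(ii) applied to the functions $+$ and $\cdot$ shows $x+y\in M$ and $\alpha x\in M$ whenever $x,y\in X\cap M$ and $\alpha\in\qe$.

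In the second stage I extend to the closure by continuity. Given $u,v\in X_M$ and $\lambda\in\er$, choose sequences $x_n,y_n\in X\cap M$ with $x_n\to u$, $y_n\to v$ and rationals $\lambda_n\to\lambda$. Stage one puts $x_n+y_n$ and $\lambda_n x_n$ in $X\cap M$, and continuity of the operations on $X$ gives $x_n+y_n\to u+v$ and $\lambda_n x_n\to\lambda u$, so both limits lie in $X_M=\ov{X\cap M}$. The main obstacle is not really mathematical but organizational: I must be careful to expand the suitable list of formulas so that the set-builder definition of the open ball, the graphs of $+$ and $\cdot$, and the uniqueness of $0$ are all absolute, and to include $\qe$ in the seed set $Y$. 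Once this bookkeeping is in place, both parts follow formally.
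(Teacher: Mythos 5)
The paper gives no proof of this proposition---it is quoted from \cite{cuth} with a pointer to Chapters 2 and 3---but your argument is correct and coincides with the standard one from that reference: Lemma \ref{lUniqueInM} (plus the absoluteness of the set-builder formula and its existential closure) for part (i), and for part (ii) the observation that $X\cap M$ is closed under addition and $\qe$-scalar multiplication, so that its closure is a closed separable linear subspace by continuity of the operations. The only cosmetic slip is writing $y\in\dom\rho$ (which is $X\times X$) where you mean $y\in X$; since $X\in M$ follows from $\langle X,\rho\rangle\in M$ via Proposition \ref{pModelBasis}(v), this is harmless.
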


\begin{convention}
The proofs in the following text often begin in the same way. To avoid unnecessary repetitions, by saying {\em``Let us fix a $(*)$-elementary submodel $M$ [containing $A_1,\ldots,A_n$]''} we will understand the following:\\[8 pt]
{\em Let us have formulas $\varphi_1,\ldots,\varphi_m$ and a countable set $Y$ such that the elementary submodel $M\prec(\varphi_1,\ldots,\varphi_m;\;Y)$ is suitable for all the propositions from \cite{cuth}. Add to them formulas marked with $(*)$ in all the preceding proofs from this paper and formulas marked with $(*)$ in the proof below (and all their subformulas). Denote such a list of formulas by $\phi_1,\ldots,\phi_k$. Let us fix a countable set $X$ containing the sets $Y$, $\omega$, $\zet$, $\qe$, $\qe_+$, $\er$, $\er_+$ and all the common operations and relations on real numbers ($+$, $-$, $\cdot$, $:$, $<$). Fix an elementary submodel $M$ for formulas $\phi_1,\ldots,\phi_k$ containing $X$ [such that $A_1,\ldots,A_n\in M$].}
\end{convention}
Thus, any $(*)$-elementary submodel $M$ is suitable for the results from \cite{cuth} and all the preceding theorems and propositions from this paper, making it possible to use all of these results for $M$.

In order to demonstrate how this technique works, we prove the following two easy lemmas which we use later (the proof of the second lemma is also  contained in the proof of Proposition 4.1 in \cite{cuth}).
\begin{lemma}\label{lNonempty}\pp\\ Whenever $A\in M$ is a nonempty set, then $A\cap M$ is nonempty. 
\end{lemma}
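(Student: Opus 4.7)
The plan is to obtain the nonemptiness of $A\cap M$ by a direct appeal to absoluteness. The key observation is that the assertion ``$A$ is nonempty'' can be written as the formula $\phi(x) \equiv \exists y\,(y\in x)$, so we add $\phi$ (marked with $(*)$) to the list of formulas for which $M$ is elementary. Concretely, we require $M$ to be a $(*)$-elementary submodel where the formula $\phi(x) \equiv \exists y\,(y\in x)$ is absolute.

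First, I would note that since $A$ is nonempty, the statement $\phi(A) \equiv \exists y\,(y\in A)$ holds in the universe. Next, since $A\in M$ and $\phi$ is absolute for $M$, the relativization $\phi^M(A)$ also holds; but by definition of the relativization this reads $\exists y\in M\,(y\in A)$. Thus there is some $y\in M$ with $y\in A$, i.e., $A\cap M \neq\emptyset$, which is exactly what we wanted.

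I do not anticipate any real obstacle here: there is no uniqueness issue that would force us to invoke Lemma \ref{lUniqueInM} (which would fail in general, since $A$ may have many elements), and no structural hypothesis on $A$ is needed beyond nonemptiness. The only ``bookkeeping'' point, already absorbed in the convention of the preceding paragraph, is to make sure the one-variable formula $\exists y\,(y\in x)$ is included among the formulas $\phi_1,\ldots,\phi_k$ witnessing that $M$ is $(*)$-elementary; this is achieved simply by marking $\phi(x)$ with $(*)$ in the proof.
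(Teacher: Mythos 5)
Your proposal is correct and is essentially identical to the paper's own proof: both mark the formula $\exists y\,(y\in x)$ with $(*)$, use that it holds for the nonempty set $A\in M$, and apply absoluteness to extract a witness in $M$. No further comment is needed.
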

\begin{proof}
\letsM and fix some nonempty set $A\in M$. Then
 \begin{flalign*}
    (*) & & \exists x\quad (x\in A). & &
 \end{flalign*}
 This formula has only one free variable $A$ and the set $A$ is contained in $M$. Thus, due to the absoluteness of the formula above, there exists an $x\in M$ such that $x\in A$.
\end{proof}

\begin{lemma}\label{lDense}\pp\\ Let $\langle X,\rho\rangle$ be a metric space, $B\subset X$. Then whenever $M$ contains $X$, $B$ and a set $D\subset B$, it is true that
 $$D\text{ is dense in }B \rightarrow D\cap M\text{ is dense in }B\cap X_M.$$
\end{lemma}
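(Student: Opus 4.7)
The plan is to mimic the pattern of Lemma \ref{lNonempty}: translate the statement ``some element of $D$ lies near a given point of $X\cap M$'' into a formula with parameters in $M$, then use absoluteness to pull its witness into $M$.

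More precisely, I would fix a $(*)$-elementary submodel $M$ containing $X$, $B$ and $D$, pick $b\in B\cap X_M$ and $\eps>0$, and aim to produce some $d\in D\cap M$ with $\rho(b,d)<\eps$. Since $\qe_+\subset M$, I can choose a rational $q\in\qe_+\cap M$ with $q<\eps/3$. Because $b\in X_M=\ov{X\cap M}$, there exists $x\in X\cap M$ with $\rho(x,b)<q$. Now the density hypothesis plus $b\in B$ yields some $d'\in D$ with $\rho(b,d')<q$; the triangle inequality gives $\rho(x,d')<2q$. Hence the sentence
\begin{flalign*}
    (*) & & \exists d\;(d\in D\;\wedge\;\rho(x,d)<2q) & &
\end{flalign*}
is true, and all its free parameters $x$, $D$, $\rho$, $2q$ belong to $M$ (note $x\in X\cap M$ by construction, $D\in M$ by assumption, $\rho\in M$ because $\langle X,\rho\rangle\in M$ together with Proposition \ref{pModelBasis}(v), and $2q\in\qe_+\subset M$). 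By absoluteness of $(*)$ for $M$, there is a witness $d\in M$ with $d\in D$ and $\rho(x,d)<2q$. Then $\rho(b,d)\le\rho(b,x)+\rho(x,d)<q+2q=3q<\eps$, so $d\in D\cap M$ is the desired approximation.

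There is no real obstacle here, only a bookkeeping point: one must confirm that $\rho$, and the real number $2q$, are accessible to $M$ (i.e. belong to $M$) so that the existential formula really has all its parameters inside $M$. The former is the standard observation that when the ordered pair $\langle X,\rho\rangle$ belongs to $M$ so do its coordinates; the latter is immediate from $\qe_+\subset M$ and the multiplication relation on rationals being in $M$ by the $(*)$-elementary submodel convention. Once this is checked, the same ``formula plus absoluteness'' move used in Lemma \ref{lNonempty} closes the proof.
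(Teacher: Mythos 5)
Your proposal is correct and follows essentially the same route as the paper: both pass from $b\in B\cap X_M$ to a nearby point of $X\cap M$, shrink the radius to a rational, and apply absoluteness of an existential formula (with parameters in $M$) asserting that some element of $D$ lies in the resulting ball. The only differences are cosmetic choices of constants ($q<\eps/3$ with radius $2q$ versus the paper's $q\in(\rho(b,b_0),r/2)$), and your bookkeeping about $\rho\in M$ and $2q\in M$ is the same justification the paper relies on implicitly.
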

\begin{proof}
 \letsMX such that $B,D\in M$. If the set $B$ is empty then the proposition is obvious. Otherwise fix $b\in B\cap X_M$ and $r > 0$. Choose some $b_o\in U(b,\frac r2)\cap M$ and a rational number $q\in (\rho(b,b_0),\tfrac r2)$. Then $U(b_0,q)\subset U(b,r)$ and
  \begin{flalign*}
    (*) & & \exists d\in D\quad (d\in U(b_0,q)). & &
  \end{flalign*}
 In the preceding formula we use the shortcut $d \in U(b_0,q)$ which stands for $d\in X \;\wedge\; \rho(d,b_0) < q$. Free variables in this formula are $X,\rho, <, D, b_0, q$. Those are contained in $M$ and thus we can use the absoluteness to find a $d\in D\cap M$ such that $(d\in U(b_0,q))^M$. Using the absoluteness again we obtain that $d$ is an element of $U(b_0,q)$. Consequently, 
$$U(b,r)\cap D\cap M\supset U(b_0,q)\cap D\cap M\neq \emptyset$$ 
and so the set $D\cap M$ is dense in $B\cap X_M$.
\end{proof}

\section{$\sigma$-porous sets}
In this section we compile several known results concerning different notions of $\sigma$-porous sets. The usefulness of these facts for our needs will be apparent later; for more information about properties and applications of different types of porosity we refer the reader to survey articles \cite{7} and \cite{zajicPory}. On some occasions we shall also refer to the paper \cite{1}.

Let us begin by stating several basic definitions.

\begin{defin}\label{dPorosities}
Let $\langle X, \rho\rangle$ be a metric space, $A\subset X$, $x \in X$ and $R > 0$. Then we denote by $\gamma(x,R,A)$ the supremum of all $r\geq 0$ for which there exists $z\in X$ such that $U(z,r)\subset U(x,R)\setminus A$. The set $A$ is called \emph{upper porous at} $x$ {\em in the space $X$} if
$$\limsup_{R\to 0^+}\frac{\gamma(x,R,A)}{R} > 0.$$

In most cases it is clear which space $X$ we have in mind. Therefore we often omit the words ``{\em in the space $X$}". (We shall apply this convention to other notions as well.) 

Let $g$ be a strictly increasing real-valued function defined on $[0,h)$ (where $h>0$) with $g(0)=0$. Such a function is called \emph{porosity function}. We say that $A$ is \emph{$\langle g \rangle$-porous at $x$ (in the space $X$)} if there exists a sequence of open balls $\{U(c_n,r_n)\}$ such that $c_n\to x$, $U(c_k,r_k)\cap A=\emptyset$ and $x\in U(c_k, g(r_k))$ for each $k$.

We say the set $A$ is \emph{$\langle g \rangle$-porous} (\emph{upper-porous}) if it is \emph{$\langle g \rangle$-porous} (\emph{upper-porous}) at each of its points and \emph{$\sigma$-$\langle g \rangle$-porous} (\emph{$\sigma$-upper porous}) if it is a countable union of \emph{$\langle g \rangle$-porous} (\emph{upper-porous}) sets.
\end{defin}

\begin{defin}
 Let $\langle X, \rho \rangle$ be a topologically complete metric space and let $g$ be a porosity function. We say that $\ef$ is a \emph{Foran system for $\langle g \rangle$-porosity} \emph{in} $X$ if the following conditions hold:
 \begin{enumerate}[(i)]
  \item $\ef$ is a nonempty family of nonempty $G_\delta$ subsets of $X$.
  \item For each $S\in\ef$ and each open set $G\subset X$ with $S\cap G\neq\emptyset$ there exists $S^*\in\ef$ such that $S^*\subset S\cap G$ and $S$ is $\langle g \rangle$-porous at no point of $S^*$.
 \end{enumerate}
\end{defin}

\begin{prop}[Foran Lemma] \label{FL}
Let $\langle X,\rho \rangle$ be a topologically complete metric space and let $\ef$ be a Foran system for $\langle g \rangle$-porosity in $X$. Then no member of $\ef$ is $\sigma$-$\langle g \rangle$-porous.
\end{prop}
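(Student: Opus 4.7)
The plan is to argue by contradiction: suppose $F\in\ef$ is $\sigma$-$\langle g\rangle$-porous, and fix a decomposition $F=\bigcup_{k\in\en} A_k$ with each $A_k$ being $\langle g\rangle$-porous, together with $G_\delta$ representations $F_j=\bigcap_{n\in\en}O_{j,n}$ (for the sets $F_j$ produced below). I would then build, by induction on $k$, a decreasing sequence $F=F_0\supset F_1\supset\cdots$ of members of $\ef$ and a strictly nested sequence of open balls $B_0\supset\overline{B_1}\supset\overline{B_2}\supset\cdots$ in $X$ with $\operatorname{diam}B_k\to 0$, such that for every $k\ge 1$:
\begin{enumerate}
\item[(a)] $F_k\subset F_{k-1}\cap B_{k-1}$ and $F_{k-1}$ is $\langle g\rangle$-porous at no point of $F_k$;
\item[(b)] $\overline{B_k}\cap A_{k-1}=\emptyset$ and $F_k\cap B_k\neq\emptyset$;
\item[(c)] $\overline{B_k}\subset O_{i,n}$ for all $i,n\le k$.
\end{enumerate}
Granting such a construction, topological completeness of $X$ yields a point $x\in\bigcap_k\overline{B_k}$; condition (c) forces $x\in\bigcap_k F_k\subset F$, while (b) (together with $x\in\overline{B_k}$) yields $x\notin A_{k-1}$ for every $k$, contradicting $F=\bigcup_k A_k$.

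For the inductive step, assume $F_{k-1}$ and $B_{k-1}$ are given with $F_{k-1}\cap B_{k-1}\neq\emptyset$. The first task is to locate an open ball $U$ with $\overline{U}\subset B_{k-1}$, $\overline{U}\cap A_{k-1}=\emptyset$ and $F_{k-1}\cap U\neq\emptyset$. Pick $y\in F_{k-1}\cap B_{k-1}$. In the easy case $y\notin\overline{A_{k-1}}$, any sufficiently small ball around $y$ contained in $B_{k-1}$ works; otherwise, take $z\in A_{k-1}$ arbitrarily close to $y$ and use the $\langle g\rangle$-porosity of $A_{k-1}$ at $z$ to extract a ball $U(c,r)\subset B_{k-1}$ disjoint from $A_{k-1}$ with $z\in U(c,g(r))$. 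Once $U$ is found, apply the Foran property to $F_{k-1}$ and $U$ to obtain $F_k\in\ef$ with $F_k\subset F_{k-1}\cap U$ at no point of which $F_{k-1}$ is $\langle g\rangle$-porous, securing (a). Finally, shrink $U$ to a smaller ball $B_k$ centered at some $p\in F_k$, of diameter less than $1/k$, with $\overline{B_k}\subset U$ and $\overline{B_k}$ inside the finitely many open sets demanded by (c); this is possible because each $O_{i,n}$ (for $i,n\le k$) is open and contains $p\in F_k\subset F_i$.

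The technical heart, and the expected main obstacle, is the second case of the induction step: verifying that the ball $U(c,r)$ extracted from the $\langle g\rangle$-porosity of $A_{k-1}$ at $z$ actually meets $F_{k-1}$, so that the Foran property may legitimately be invoked with $G=U$. One exploits that $z$ can be taken arbitrarily close to $y\in F_{k-1}$ while the relation $z\in U(c,g(r))$ localises the ball $U(c,r)$ near $z$, but the quantitative relationship between $r$ and $g(r)$ -- controlled by the porosity function $g$ near $0$ -- must be handled carefully to guarantee that some point of $F_{k-1}$ near $y$ actually falls inside $U(c,r)$ itself. Beyond this geometric point, the bookkeeping with the countable families $\{O_{i,n}\}$ (ensuring (c) at each step) and the final invocation of completeness to extract the limit point are routine.
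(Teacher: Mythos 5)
The paper does not actually prove this proposition: it is quoted as a special case of the general Foran Lemma for porosity-like relations, with a citation to [2, Proposition 1]. So your attempt has to be measured against the standard argument from the literature. Your overall architecture is the right one — argue by contradiction, build a decreasing chain $F_k\in\ef$ via the Foran property together with nested closed balls that take care of the $G_\delta$ representations, and use completeness to trap a limit point in $\bigcap_k F_k\setminus\bigcup_k A_k$.

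The step you yourself flag as the ``technical heart'' is, however, not a technicality but a genuine gap, and the mechanism you propose for it cannot be repaired. The ball $U(c,r)$ furnished by the $\langle g\rangle$-porosity of $A_{k-1}$ at $z$ is just some ball disjoint from $A_{k-1}$ lying within distance $g(r)$ of $z$; nothing ties its location to $F_{k-1}$. In every nontrivial application the members of $\ef$ are nowhere dense, so such a hole will typically sit entirely inside a complementary gap of $F_{k-1}$, and no quantitative control of $g$ near $0$ can help, because the porosity of $A_{k-1}$ carries no information about where $F_{k-1}$ lives. Worse, the object you seek --- a ball $U$ with $\overline{U}\subset B_{k-1}$, $U\cap F_{k-1}\neq\emptyset$ and $\overline{U}\cap A_{k-1}=\emptyset$ --- need not exist at that stage at all: the porous set $A_{k-1}$ may contain, or be dense in, $F_{k-1}\cap B_{k-1}$ (your induction hypothesis controls the porosity of $F_{k-2}$ at points of $F_{k-1}$, not of $F_{k-1}$ at its own points). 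The real content of the Foran Lemma is the resolution of exactly this difficulty, and it requires reversing your order of operations: first apply the Foran property to $F_{k-1}$ and $B_{k-1}$ to obtain $F_k$ at no point of which $F_{k-1}$ is $\langle g\rangle$-porous, and only then argue that $A_{k-1}$ cannot cover $F_{k-1}$ near any point $x\in F_k$ --- for if $F_{k-1}\cap V\subset A_{k-1}$ for some neighbourhood $V$ of $x$, then $x\in A_{k-1}$, so $A_{k-1}$ is $\langle g\rangle$-porous at $x$, hence so is its subset $F_{k-1}\cap V$, hence so is $F_{k-1}$ by the local character of porosity at a point (one of the axioms of a porosity-like relation in [2]), contradicting the choice of $F_k$. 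This interplay of antitonicity and locality is the engine of the proof and is absent from your argument; even with it, some further care (e.g.\ passing from ``points of $F_{k-1}\setminus A_{k-1}$ accumulate at $F_k$'' to a closed ball whose closure misses $A_{k-1}$) is needed, for which I recommend consulting the proof of [2, Proposition 1].
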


This is a special case of the general Foran Lemma which works for any porosity-like relation (see [\ref{2}, Proposition 1]). Our definition of Foran system is, therefore, accordingly simplified as well. We also need the following.

\begin{notation}
By 3-porosity we mean $\langle g \rangle$-porosity where $g(x)=3x$ for $x \in \er$.
\end{notation}

\begin{lemma}[{[\ref{2}, Lemma E]}]\label{3iffNIC}
Let $\langle X,\rho \rangle$ be a metric space and let $A\subset X$. Then $A$ is $\sigma$-upper porous if and only if it is $\sigma$-3-porous.
\end{lemma}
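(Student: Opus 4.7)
The implication from $\sigma$-$3$-porosity to $\sigma$-upper porosity is immediate from the definitions. If a set $B$ is $3$-porous at $x\in B$, then by definition there exist balls $U(c_n,r_n)\subset X\setminus B$ with $c_n\to x$ and $\rho(x,c_n)<3r_n$. Since $x\in B$ lies outside each such ball, $r_n\leq\rho(x,c_n)\to 0$, and the triangle inequality yields $U(c_n,r_n)\subset U(x,4r_n)\setminus B$, so $\gamma(x,4r_n,B)/(4r_n)\geq 1/4$ for every $n$. Hence $B$ is upper porous at $x$ (in fact with constant at least $1/4$), and a countable union gives the implication at the $\sigma$-level.

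For the converse, it suffices to show that every upper porous set $A$ is $\sigma$-$3$-porous. I would first perform the countable decomposition $A=\bigcup_n A_n$ with
$$A_n=\left\{x\in A:\limsup_{R\to 0^+}\frac{\gamma(x,R,A)}{R}>\frac1n\right\},$$
and then handle each $A_n$ separately. For $x\in A_n$, any admissible hole $U(z,r)\subset U(x,R)\setminus A$ with $r>R/n$ forces $\rho(x,z)\leq R-r<(n-1)r$. When $n\leq 4$ this already yields $3$-porosity of $A_n$ at $x$, since witnesses taken along a sequence $R\to 0^+$ satisfy $z\to x$, $r\to 0$, and $\rho(x,z)<3r$.

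The hard part is the range $n\geq 5$: the ratio $\rho(x,z)/r$ is only controlled by $n-1>3$, and neither shrinking the ball inside a given hole nor reapplying upper porosity of $A$ at the smaller scale $\rho(x,z)$ improves this ratio in general. The remedy is a further countable decomposition of $A_n$ combined with an iterated bootstrapping of the porosity constant, exploiting that the $\limsup$ in the definition supplies witnesses at arbitrarily fine scales which can be combined across scales. This bootstrapping is the technical heart of the statement and is exactly what is carried out in [\ref{2}, Lemma E]; I would follow that argument, assembling the countable $3$-porous decomposition produced at each stage.
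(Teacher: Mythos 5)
The paper does not actually prove this lemma --- it is imported verbatim as [\ref{2}, Lemma E] of Zaj\'{\i}\v{c}ek, so there is no internal argument to compare yours against. Your proof of the implication from $\sigma$-$3$-porosity to $\sigma$-upper porosity is correct and complete: for a $3$-porous piece $B$ and $x\in B$, the witnessing holes $U(c_n,r_n)$ miss $B$, so $r_n\le\rho(x,c_n)\to 0$ and $U(c_n,r_n)\subset U(x,4r_n)\setminus B$, whence $\limsup_{R\to 0^+}\gamma(x,R,B)/R\ge 1/4$. One small inaccuracy elsewhere: in a general metric space the inclusion $U(z,r)\subset U(x,R)$ does not force $\rho(x,z)\le R-r$; what you do get (since $z\in U(z,r)\subset U(x,R)$) is $\rho(x,z)<R<nr$, which costs you one unit in the constant ($n\le 3$ rather than $n\le 4$ in your trivial range) but changes nothing structurally.

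The converse is where the entire content of the lemma lives, and your proposal does not prove it. The decomposition $A=\bigcup_n A_n$ by the size of the upper porosity constant, and the observation that for $x\in A_n$ the holes only satisfy $\rho(x,z)/r$ bounded by roughly $n$, is the correct starting point, and you correctly diagnose that upgrading an arbitrarily weak porosity constant to the fixed constant $3$ is the nontrivial step. But the ``iterated bootstrapping across scales'' you invoke is precisely the assertion to be established, and you supply no mechanism for it --- you explicitly fall back on [\ref{2}, Lemma E], i.e.\ on the very result being proved. As a self-contained argument the proposal therefore has a genuine gap in the hard implication. As a reading of the paper it is, if anything, more than the authors offer, since they cite [\ref{2}, Lemma E] for both directions; if a citation is acceptable here (as it evidently is to the authors), your writeup is consistent with the paper, but be aware that the half you prove from scratch is the half that did not need proving.
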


Another result from \cite{2} which we shall use is the following partial converse of the Foran Lemma. For ordinary $\sigma$-upper porosity we can extend its validity from $G_\delta$ sets to Suslin sets using the inscribing Theorem \ref{inscrupper} of J.~Pelant and M.~Zelen\'y from the work \cite{pelzel}. 

It could be interesting to note that in case our metric space $X$ is locally compact, we can use a different inscribing theorem due to L.~Zaj\'i\v{c}ek and M.~Zelen\'y [\ref{zajzelinscr}, Theorem 5.2] and obtain an extension of \ref{Foran Converse} to analytic sets for general $\sigma$-$\langle g \rangle$-porosity.

\begin{lemma}[{[\ref{2}, Corollary 1]}]\label{Foran Converse}
Let $\langle X,\rho \rangle$ be a topologically complete metric space, let $\emptyset\neq A\subset X$ be $G_\delta$ and let $g$ be a porosity function. Then $A$ is not $\sigma$-$\langle g \rangle$-porous if and only if it contains a member of a Foran system for $\langle g \rangle$-porosity.
\end{lemma}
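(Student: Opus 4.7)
The direction $(\Leftarrow)$ is immediate from the Foran Lemma (Proposition \ref{FL}): if $A$ contains some $S \in \ef$ for a Foran system $\ef$, then $S$ is not $\sigma$-$\langle g\rangle$-porous, and since $\langle g\rangle$-porosity at a point trivially passes to subsets (the witnessing balls disjoint from $S$ remain disjoint from any subset of $S$), so does $\sigma$-$\langle g\rangle$-porosity; hence $A \supset S$ cannot be $\sigma$-$\langle g\rangle$-porous either.

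For the substantive direction $(\Rightarrow)$, suppose $A$ is nonempty $G_\delta$ and not $\sigma$-$\langle g\rangle$-porous. The plan is to construct a Foran system $\ef$ containing a subset of $A$. First, a Baire-category shrinking: let $U$ be the union of all open sets $G \subset X$ for which $A \cap G$ is $\sigma$-$\langle g\rangle$-porous; by a Lindel\"of-type argument $A \cap U$ is itself $\sigma$-$\langle g\rangle$-porous, so $B := A \setminus U$ is a nonempty $G_\delta$ subset of $A$ with the property that $B \cap G$ is non-$\sigma$-$\langle g\rangle$-porous whenever $G$ is open and meets $B$ (then $G \not\subset U$, hence $A \cap G$ is non-$\sigma$-porous, and $B \cap G = (A \cap G) \setminus U$ inherits this).

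Next, I would record two facts about the porosity locus $P(S) := \{x \in X : S \text{ is } \langle g\rangle\text{-porous at } x\}$ of an arbitrary $S \subset X$: (a) $P(S)$ is $G_\delta$, equal to $\bigcap_n V_n$ where $V_n := \{x \in X : \exists c \in X,\ r > 0 \text{ with } \rho(x,c) < 1/n,\ \rho(x,c) < g(r),\ U(c,r) \cap S = \emptyset\}$ is easily seen to be open; and (b) $S \cap P(S)$ is $\langle g\rangle$-porous (the balls witnessing porosity of $S$ at any point of $S \cap P(S)$ are, by $S \cap P(S) \subset S$, also disjoint from $S \cap P(S)$). Now take $\ef$ to consist of all nonempty $G_\delta$ sets $T \subset X$ with $T \cap H$ non-$\sigma$-$\langle g\rangle$-porous for every open $H$ meeting $T$; by the first step $B \in \ef$, so Foran condition (i) holds. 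For (ii), given $S \in \ef$ and open $G$ with $S \cap G \neq \emptyset$, set $T := S \cap G$, a $G_\delta$ set which, by $S \in \ef$, is not $\sigma$-$\langle g\rangle$-porous. Using (a), write $X \setminus P(S) = \bigcup_n F_n$ with $F_n$ closed; then $T \setminus P(S) = \bigcup_n (T \cap F_n)$ is a countable union of $G_\delta$ sets. Since $T \cap P(S) \subset S \cap P(S)$ is $\langle g\rangle$-porous by (b), while $T$ is not $\sigma$-$\langle g\rangle$-porous, $T \setminus P(S)$ cannot be $\sigma$-$\langle g\rangle$-porous, so some piece $N := T \cap F_n$ is a $G_\delta$, non-$\sigma$-$\langle g\rangle$-porous subset of $(S \cap G) \setminus P(S)$. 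Applying the Baire-category shrinking of the first step to $N$ (in place of $A$) produces $S^* \in \ef$ with $S^* \subset N$; then $S^* \subset S \cap G$ and $S^* \cap P(S) = \emptyset$, so $S$ is $\langle g\rangle$-porous at no point of $S^*$, verifying (ii).

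The main obstacle is verifying (ii): the naive candidate $(S \cap G) \setminus P(S)$ need not be $G_\delta$, since $P(S)$ is $G_\delta$ but generally not $F_\sigma$. The unblocking insight is that $X \setminus P(S)$ is $F_\sigma$, which lets one decompose $(S \cap G) \setminus P(S)$ into countably many $G_\delta$ pieces and, by the $\sigma$-stability of $\sigma$-$\langle g\rangle$-porosity, extract a single non-$\sigma$-porous piece that can then be refined by the Baire-category shrinking.
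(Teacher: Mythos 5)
The paper offers no proof of this lemma at all --- it is imported from Zaj\'i\v{c}ek [\ref{2}, Corollary 1] as a black box --- so there is no internal argument to compare against. Your reconstruction is essentially the standard proof from that source, and it is correct in structure and in all the main steps: the hereditarity of $\sigma$-$\langle g\rangle$-porosity for $(\Leftarrow)$; the ``kernel'' $B=A\setminus U$ which is non-$\sigma$-$\langle g\rangle$-porous in every relatively open nonempty subset; the family $\ef$ of all nonempty $G_\delta$ sets with this hereditary non-$\sigma$-porosity property; the observation that the porosity locus $P(S)$ is $G_\delta$ and that $S\cap P(S)$ is $\langle g\rangle$-porous; and the $F_\sigma$ decomposition of $X\setminus P(S)$ to extract a $G_\delta$, non-$\sigma$-porous piece of $(S\cap G)\setminus P(S)$, which is then shrunk again to land in $\ef$. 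All of these check out.

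The one step you should not wave at is the claim that $A\cap U$ is $\sigma$-$\langle g\rangle$-porous. Here $X$ is a general topologically complete metric space, not assumed separable, so $U$ need not be Lindel\"of and you cannot extract a countable subcover from the family of open sets $G$ with $A\cap G$ $\sigma$-porous. The statement you actually need is the standard ``local $\sigma$-porosity implies $\sigma$-porosity'' lemma, valid in every metric space; its proof replaces Lindel\"of by paracompactness: take a $\sigma$-discrete open refinement of the cover of $U$ and verify that the union of a \emph{discrete} family of $\langle g\rangle$-porous sets is $\langle g\rangle$-porous. That last verification needs a small radius adjustment --- if $U(x,\delta)$ meets only one member of the family, replace each witnessing radius $r_k$ by $\min(r_k,\delta/2)$ and note that for large $k$ one still has $\rho(x,c_k)<g(\min(r_k,\delta/2))$ because $g(\delta/2)>0$ --- which is exactly the kind of detail that can go wrong for other porosity notions, so it is worth writing out. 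With that lemma cited or proved, your argument is complete.
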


\begin{thm}[{[\ref{pelzel}, Theorem 3.1]}]\label{inscrupper}
Let $\langle X,\rho \rangle$ be a topologically complete metric space and let $S\subseteq X$ be a non-$\sigma$-upper porous Suslin set. Then there exists a closed non-$\sigma$-upper porous set $F\subseteq S$.
\end{thm}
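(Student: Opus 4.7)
The plan is to argue by contraposition: assume every closed $F\subseteq S$ is $\sigma$-upper porous and deduce that $S$ itself is $\sigma$-upper porous, contradicting the hypothesis. By Lemma \ref{3iffNIC} we may work throughout with $3$-porosity instead. Since $S$ is Suslin, fix a Suslin scheme $\{F_s : s\in \omega^{<\omega}\}$ of closed subsets of $X$ with $F_t\subseteq F_s$ whenever $t$ extends $s$, with $\mathrm{diam}(F_s)\to 0$ along each branch, and with $S=\bigcup_{\sigma\in\omega^\omega}\bigcap_n F_{\sigma|n}$. Call a node $s$ \emph{fat} if the partial union $S_s := \bigcup_{\sigma\supseteq s}\bigcap_n F_{\sigma|n}$ is not $\sigma$-$3$-porous; the empty node is fat by assumption, and since a countable union of $\sigma$-$3$-porous sets is $\sigma$-$3$-porous, every fat node has at least one fat immediate successor.

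The goal is to build, by recursion on level, a finitely branching subtree $T$ of fat nodes together with open balls $B_s$ and points $x_s\in F_s\cap S_s$ for each $s\in T$, so that at every level the balls are pairwise disjoint and shrink geometrically, the closed sets $F_s$ sit inside $B_s$, and each $x_s$ is a witness that $S_s$ is not $3$-porous at $x_s$ at some specified scale. Along the resulting branches one obtains a nonempty closed set $K\subseteq S$, and from the family of ``slices'' of $K$ through the nodes of $T$ one reads off a Foran system $\ef$ for $3$-porosity whose members are nonempty $G_\delta$ subsets of $K$: the Foran refinement condition is met because at each fat node the non-$3$-porosity witness $x_s$, together with some fat successor, produces a smaller member of $\ef$ inside any prescribed open set meeting the current member, and no point of the smaller member is a point of $3$-porosity of the larger. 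Proposition \ref{FL} then forces no member of $\ef$ to be $\sigma$-$3$-porous, so $K$, a closed subset of $S$, is not $\sigma$-$3$-porous; by Lemma \ref{3iffNIC} it is not $\sigma$-upper porous, and we are done.

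The main obstacle is making sure that non-$3$-porosity survives the passage to the limit. Simply following an arbitrary fat branch $\sigma^*$ yields a closed set $\bigcap_n F_{\sigma^*|n}\subseteq S$, but there is no reason for it to remain non-$\sigma$-upper-porous: the non-porous mass of $S_s$ could be spread across uncountably many distinct branches through $s$, and the intersection along a single branch might collapse onto a $\sigma$-upper-porous (even singleton) set. One must therefore interleave the branch selection with a careful quantitative bookkeeping of $3$-porosity witnesses, passing at each step to sub-schemes on which the witnesses persist and can be refined. This is the technical heart of the Pelant--Zelen\'y construction in \cite{pelzel}. An alternative route, which works in more general contexts, is to verify that the $\sigma$-ideal of $\sigma$-upper-porous sets belongs to a class of $\sigma$-ideals for which an abstract inscribing theorem is available (in the spirit of Kechris--Louveau--Woodin or Zelen\'y), but the direct construction via a Foran system on a Suslin scheme is the most transparent and is the one used in \cite{pelzel}.
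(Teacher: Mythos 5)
The paper does not prove this statement at all; it is imported verbatim as [\ref{pelzel}, Theorem 3.1] of Pelant and Zelen\'y, so there is no in-paper argument to compare yours against. Judged on its own, your proposal correctly identifies a strategy that is close in spirit to the actual Pelant--Zelen\'y construction (reduce to $3$-porosity via Lemma \ref{3iffNIC}, build a Foran system over a Suslin scheme of fat nodes, invoke Proposition \ref{FL}), and you correctly diagnose why the naive version fails: following a single fat branch can collapse onto a $\sigma$-porous set. But you then explicitly defer the step that repairs this --- the recursive bookkeeping of porosity witnesses and the passage to sub-schemes on which they persist --- calling it ``the technical heart'' and pointing back to \cite{pelzel}. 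That step is the entire content of the theorem; everything you do carry out (fat nodes have fat successors, the slices of $K$ are $G_\delta$, the Foran Lemma finishes) is routine. As written, this is an announcement of a proof strategy, not a proof.

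There is also a concrete error in your setup. You fix a Suslin scheme of closed sets with $\mathrm{diam}(F_s)\to 0$ along every branch. For a monotone scheme this makes each $\bigcap_n F_{\sigma|n}$ at most a singleton, so $S$ becomes a continuous image of a closed subset of $\omega^\omega$ and is therefore separable. The theorem, however, is stated (and is used in the proof of Theorem \ref{tUpPorous}) for Suslin sets in arbitrary topologically complete metric spaces; already $S=X$ with $X$ nonseparable is a Suslin, non-$\sigma$-upper porous set admitting no such scheme. Nor can you first reduce to the separable case: separable determination of $\sigma$-upper porosity is precisely what this paper is establishing, so invoking it here would be circular. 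You must either work with a general monotone closed Suslin scheme (no diameter control) or restrict the balls $B_s$, not the sets $F_s$, to force convergence along branches.
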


\begin{defin}\label{dLower}
 Let $\langle X, \rho\rangle$ be a metric space, $A\subset X$ and $x \in X$. We say that $A$ is \emph{lower porous at} $x$ if $$\liminf_{R\to 0^+}\frac{\gamma(x,R,A)}{R} > 0.$$

The set $A$ is \emph{lower porous} if it is lower porous at each of its points and  \emph{$\sigma$-lower porous} if it is a countable union of lower porous sets.
\end{defin}

Even though the Foran Lemma can be used for any notion of porosity, we have to use a different approach in the case of lower porosity. The reason is that unlike in the case of upper porosity, we were unable to separably reduce the property of not being \emph{lower} porous at a point. Therefore, we use the following proposition.

\begin{prop}[{[\ref{martin}, Proposition 2.11]}]\label{cLowerIff}
Let $\langle X,\rho \rangle$ be a topologically complete metric space and let $A\subseteq X$ be a Suslin set. Then the following propositions are equivalent:
\begin{enumerate}[(i)]
\item A is not $\sigma$-lower porous.
\item There exists a closed set $F\subseteq A$ and a set $D\subseteq F$ dense in $F$ such that $F$ is lower porous at no point of $D$.
\end{enumerate}
\end{prop}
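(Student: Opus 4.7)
The plan is to prove the two implications separately. The easy direction (ii) $\Rightarrow$ (i) will be a Baire category argument exploiting the density of $D$ in $F$, while (i) $\Rightarrow$ (ii) requires two ingredients -- a Suslin-type inscribing theorem for $\sigma$-lower porosity and a transfinite shrinking -- with the inscribing step being the main obstacle. For (ii) $\Rightarrow$ (i), I would argue by contradiction: assume $A=\bigcup_{n\in\en}P_n$ with each $P_n$ lower porous, and refine to a uniform decomposition by setting
$$P_{n,k,l} = \{x\in P_n:\gamma(x,R,P_n)\geq R/k\text{ for every }R\in(0,1/l]\},$$
so that $P_n=\bigcup_{k,l\in\en}P_{n,k,l}$. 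Then $F=\bigcup_{n,k,l}(F\cap P_{n,k,l})$, and since $F$ is closed in the topologically complete $X$ and hence itself topologically complete, Baire's theorem yields indices $(n,k,l)$ and an open $V\subset X$ with $\emptyset\neq V\cap F\subset\overline{F\cap P_{n,k,l}}$. The density of $D$ provides $d\in D\cap V$. Since $d\in\overline{P_{n,k,l}}$, a short approximation with $\eps\to 0^+$ -- using points $p\in P_{n,k,l}$ at distance less than $\eps$ from $d$, together with $U(p,R-\eps)\subset U(d,R)$ -- propagates the uniform bound to the closure: $\gamma(d,R,P_n)\geq R/k$ for every $R<1/l$. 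Hence for every $r<R/k$ there exists $z$ with $U(z,r)\subset U(d,R)\setminus P_n$; if $R$ is small enough that $U(d,R)\subset V$, then $U(z,r)\subset V$, and since $U(z,r)$ is open and disjoint from $P_{n,k,l}$ it is also disjoint from $\overline{P_{n,k,l}}$; because $F\cap V\subset\overline{F\cap P_{n,k,l}}\subset\overline{P_{n,k,l}}$ this forces $U(z,r)\cap F=\emptyset$. Therefore $\gamma(d,R,F)\geq R/k$ for all sufficiently small $R$, making $F$ lower porous at $d$, contradicting (ii).

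For (i) $\Rightarrow$ (ii), the first step -- and the hard part -- is an inscribing theorem: every non-$\sigma$-lower-porous Suslin set in a topologically complete metric space contains a closed non-$\sigma$-lower-porous subset. This is an analogue of Theorem \ref{inscrupper} for lower porosity, and I would expect it to be proved by a Cantor-scheme construction on the Suslin representation of $A$, retaining at each stage sub-schemes whose bodies remain non-$\sigma$-lower-porous, in the spirit of the Pelant--Zelen\'y argument; however, lower porosity is considerably more delicate than upper porosity (as already noted in the text preceding Proposition \ref{cLowerIff}), so extra care will be needed there.

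Given such a closed non-$\sigma$-lower-porous $F\subset A$, I would produce $D$ by a transfinite shrinking. Put $F_0=F$, and at successor stages set $F_{\alpha+1}=F_\alpha\setminus U_\alpha$ whenever there is an open $U_\alpha\subset X$ with $F_\alpha\cap U_\alpha$ nonempty and lower porous; at limits take intersections. Separability of $F$ forces stabilization at a countable ordinal $\alpha_0$, and $F\setminus F_{\alpha_0}=\bigcup_{\alpha<\alpha_0}(F_\alpha\cap U_\alpha)$ is a countable union of lower porous sets, hence $\sigma$-lower porous; consequently $F_{\alpha_0}\neq\emptyset$, for otherwise $F$ itself would be $\sigma$-lower porous. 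At the stable stage, for every open $U$ meeting $F_{\alpha_0}$ the set $F_{\alpha_0}\cap U$ is not lower porous, which supplies some $y\in F_{\alpha_0}\cap U$ at which $F_{\alpha_0}\cap U$ -- and hence $F_{\alpha_0}$ itself, on the open neighborhood $U$ of $y$ -- fails to be lower porous. The set $D$ of all such points is therefore dense in $F_{\alpha_0}$, and the pair $(F_{\alpha_0},D)$ witnesses (ii).
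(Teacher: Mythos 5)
Your direction (ii)$\Rightarrow$(i) is correct: the uniform decomposition into the sets $P_{n,k,l}$, the Baire category argument inside the closed (hence topologically complete) set $F$, the propagation of the bound $\gamma(\cdot,R,P_n)\geq R/k$ from $P_{n,k,l}$ to points of its closure, and the passage from ``disjoint from $P_n$'' to ``disjoint from $F$'' inside $V$ all check out. Note that the paper gives no proof of this proposition at all --- it is imported from the preprint \cite{4} --- so the comparison below is against what a complete proof must contain, not against a printed argument.

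The direction (i)$\Rightarrow$(ii) has one citable omission and one genuine gap. The omission: the inscribing theorem you postulate (every non-$\sigma$-lower porous Suslin set in a topologically complete metric space contains a closed non-$\sigma$-lower porous subset) is indeed the crux, and it is not something to re-derive ``in the spirit of Pelant--Zelen\'y''; it is precisely the theorem of Zaj\'{\i}\v{c}ek and Zelen\'y listed as \cite{6} in this paper's bibliography, so it should simply be cited. The gap: you write that ``separability of $F$ forces stabilization at a countable ordinal,'' but $X$ is an arbitrary topologically complete metric space, nothing makes $F$ separable, and the proposition is applied in Theorem \ref{tLoPorous} to the full, possibly nonseparable, space $X$. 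Without hereditary Lindel\"ofness your transfinite removal may run for uncountably many steps; then $F\setminus F_{\alpha_0}=\bigcup_{\alpha<\alpha_0}(F_\alpha\cap U_\alpha)$ is an uncountable union of lower porous sets, you cannot conclude it is $\sigma$-lower porous, and hence you cannot conclude $F_{\alpha_0}\neq\emptyset$. The standard repair is to remove in a single step the union $G$ of all open $U$ with $F\cap U$ $\sigma$-lower porous, and to prove that $F\cap G$ is $\sigma$-lower porous using a $\sigma$-discrete base of the metric space together with the locality of lower porosity at a point (a union of lower porous sets, each contained in its own member of a discrete family of open sets, is again lower porous). The kernel $F^*=F\setminus G$ is then nonempty and closed, and for every open $U$ meeting $F^*$ the set $F\cap U$ is non-$\sigma$-lower porous, which --- exactly as in your last paragraph --- yields a point of $F^*\cap U$ at which $F^*$ is not lower porous; the set $D$ of all such points is dense in $F^*$.
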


\section{Auxiliary results}

In this section we prove some preliminary statements which will be of use later. In general, for a space $X$ and a set $A\subset X$, we are trying to find a separable subspace $X_M\subset X$ with certain special properties. The first desired property is: Whenever $A$ is a member of a Foran system in $X$ then $A\cap X_M$ is a member of a Foran system in $X_M$. Together with Lemma \ref{Foran Converse} this will be essential to the proof of Theorem \ref{tUpPorous} about $\sigma$-upper porosity. 

Also, in order to prove a result similar to \ref{tUpPorous} for $\sigma$-lower porosity, two auxiliary propositions (based on the ideas from \cite{cuth}) are collected.

\begin{prop}\label{pGPorous}\pp\\ Let $\langle X,\rho\rangle$ be a metric space and $g$ a porosity function. Then whenever $M$ contains $X$ and a set $A\subset X$, it is true that for every $x\in X_M$
$$A\text{ is not }\langle g \rangle\text{-porous at }x \rightarrow A\cap X_M\text{ is not }\langle g \rangle\text{-porous at }x\text{ in the space $X_M$}.$$
If $M$ contains also $g$, then
$$A\text{ is not $\langle g \rangle$-porous} \rightarrow A\cap X_M\text{ is not $\langle g \rangle$-porous in the space $X_M$}.$$
\end{prop}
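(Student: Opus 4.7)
The plan is to prove both implications by contraposition. The first statement will be handled by a direct ``ball-inscription'' argument inside $M$, and the second will follow from the first by a standard absoluteness argument once the additional hypothesis $g\in M$ is available.

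For the first part, suppose $A\cap X_M$ is $\langle g\rangle$-porous at some $x\in X_M$ in the space $X_M$, with a witnessing sequence of balls $U(c_n,r_n)$ satisfying $c_n\in X_M$, $c_n\to x$, $\rho(x,c_n)<g(r_n)$, and $U(c_n,r_n)\cap A\cap X_M=\emptyset$. The plan is to show that the \emph{same} sequence $(c_n,r_n)$ witnesses $\langle g\rangle$-porosity of $A$ at $x$ in the ambient space $X$. The conditions $c_n\to x$ and $\rho(x,c_n)<g(r_n)$ transfer verbatim, so the only nontrivial point is to upgrade $U(c_n,r_n)\cap A\cap X_M=\emptyset$ to $U(c_n,r_n)\cap A=\emptyset$. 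Assume for contradiction that some $a\in U(c_n,r_n)\cap A$ exists, and set $\delta:=r_n-\rho(c_n,a)>0$. Using that $X\cap M$ is dense in $X_M$, I would pick $\tilde c_n\in X\cap M$ with $\rho(c_n,\tilde c_n)$ much smaller than $\delta$, together with a rational $\tilde r_n\in\qe_+\cap M$ slightly smaller than $r_n$, arranged by triangle inequality so that
\[
  a\in U(\tilde c_n,\tilde r_n)\subset U(c_n,r_n).
\]
Since $X, A,\tilde c_n,\tilde r_n\in M$, Propositions~\ref{pModelBasisSpaces}(i) and~\ref{pModelBasis}(i) yield $A\cap U(\tilde c_n,\tilde r_n)\in M$; this set contains $a$ and is thus nonempty, so Lemma~\ref{lNonempty} produces $b\in A\cap U(\tilde c_n,\tilde r_n)\cap M$. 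Then $b\in A\cap M\subset A\cap X_M$ and $b\in U(c_n,r_n)$, contradicting $U(c_n,r_n)\cap A\cap X_M=\emptyset$.

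For the second part I would place the formula ``$A$ is $\langle g\rangle$-porous at $x$ in $X$'' (together with all its subformulas) on the list marked $(*)$. With $g\in M$ now available, this renders the sentence ``$\exists x\in A$ such that $A$ is not $\langle g\rangle$-porous at $x$'' absolute for $M$. Hence, if $A$ is not $\langle g\rangle$-porous then such a witness exists, and absoluteness produces one inside $A\cap M\subset X\cap M\subset X_M$; applying the first part at this $x$ shows $A\cap X_M$ is not $\langle g\rangle$-porous at $x$ in $X_M$, so a fortiori $A\cap X_M$ is not $\langle g\rangle$-porous in $X_M$.

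The main obstacle is really concentrated in the first part, precisely because $g\in M$ is not assumed there and one cannot appeal to absoluteness of the porosity formula itself. The remedy is exactly the ``shrink the ball inside $M$'' maneuver above: the witnessing sequence $(c_n,r_n)$ is kept in $X_M$ rather than pulled into $M$, and only the avoidance property is transferred via Lemma~\ref{lNonempty}. Pleasantly, this strategy also sidesteps any concern about discontinuities of the porosity function $g$, since we never need to compare $g(\tilde r_n)$ with $g(r_n)$.
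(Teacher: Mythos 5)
Your proposal is correct and follows essentially the same route as the paper: the core step in both is to perturb the center of a ball meeting $A$ to a point of $X\cap M$ and shrink the radius to a rational so that absoluteness (equivalently, Lemma~\ref{lNonempty} applied to $A\cap U(\tilde c_n,\tilde r_n)\in M$) yields a point of $A\cap M$ inside the original ball, and the second assertion is then obtained exactly as in the paper by making the porosity formula absolute once $g\in M$. The only difference is the contrapositive packaging of the first part, which is immaterial.
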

\begin{proof}
 Let us fix a $(*)$-elementary submodel $M$ containing $X$ and $A$ and fix some $x\in X_M$ such that $A$ is not $\langle g \rangle$-porous at $x$. Take sequences $\{c_n\}_{n\in\en}\subset X_M$ and $\{r_n\}_{n\in\en}\subset (0,\infty)$ such that $c_n\to x$ and $x\in U(c_n,g(r_n))$ for all $n\in\en$. It is sufficient to show that there exists an $n\in\en$ satisfying $U(c_n,r_n)\cap A\cap X_M\neq \emptyset$. Since $A$ is not $\langle g\rangle$-porous, we can fix some $n\in\en$ such that $U(c_n,r_n)\cap A\neq\emptyset$. Take some $a\in A\cap U(c_n,r_n)$ and choose an $\eps > 0$ such that $\rho(a,c_n) + 2\eps < r_n$. Then take a point $c\in X\cap M\cap U(c_n,\eps)$ and $q_n\in\qe\cap (\rho(a,c_n) + \eps, r_n - \eps)$. Hence,
 \begin{flalign*}(*) & & \exists a\in A\quad (\rho(a,c) < q_n).& &\end{flalign*}
 Thus, by the absoluteness, there exists an $a\in A\cap M$ such that
 $$\rho(a,c_n)\leq \rho(a,c) + \eps < q_n + \eps < r_n.$$
 Consequently, $a\in A\cap U(c_n,r_n)\cap M$ and thus the set $A\cap X_M$ is not $\langle g\rangle$-porous at $x$ in the space $X_M$.

 If $A$ is not $\langle g \rangle$-porous then
 \begin{flalign*}(*) & & \exists x\in A\quad(A\text{ is not }\langle g\rangle\text{-porous at }x).& &\end{flalign*}
Using the absoluteness and the already proved part we obtain an $x\in A\cap M$ such that $A\cap X_M$ is not $\langle g \rangle$-porous at $x$ in the space $X_M$.
\end{proof}

\begin{prop}\label{FS redukce}\pp\\ Let $\langle X,\rho\rangle$ be a topologically complete metric space and $g$ a porosity function. Then whenever $M$ contains $X$, $g$ and a set $A\subset X$, it is true that if $A$ is a member of a Foran system for $\langle g\rangle$-porosity in $X$, then $A\cap X_M$ is a member of a Foran system for $\langle g\rangle$-porosity in $X_M$.
\end{prop}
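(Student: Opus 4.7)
The plan is to fix a $(*)$-elementary submodel $M$ containing $X$, $g$, and $A$, and then exhibit an explicit Foran system in $X_M$ that contains $A\cap X_M$. Since by hypothesis the formula
\begin{flalign*}
 (*) & & \exists\,\mathcal{F}\ \big(\mathcal{F}\text{ is a Foran system for }\langle g\rangle\text{-porosity in }X\ \wedge\ A\in\mathcal{F}\big) & &
\end{flalign*}
is true and all its free parameters ($X$, $\rho$, $g$, $A$) belong to $M$, absoluteness produces some $\mathcal{F}\in M$ witnessing it, so in particular $A\in\mathcal{F}\cap M$. The proposed candidate is
$$\mathcal{F}_M := \{\,S\cap X_M : S\in\mathcal{F}\cap M\,\},$$
which trivially contains $A\cap X_M$, and the remaining task will be to check the two axioms of a Foran system.

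Axiom (i) is easy: every $S\in\mathcal{F}\cap M$ is $G_\delta$ in $X$, hence $S\cap X_M$ is $G_\delta$ in the subspace $X_M$; and $S\in M$ is nonempty, so by Lemma \ref{lNonempty} the set $S\cap M$, and therefore $S\cap X_M$, is nonempty as well. For axiom (ii), fix $S\in\mathcal{F}\cap M$ and an open set $G\subset X_M$ with $(S\cap X_M)\cap G\neq\emptyset$. Choose $x\in(S\cap X_M)\cap G$ and $r>0$ with $U(x,r)\cap X_M\subset G$, and in the manner of Lemma \ref{lDense} pick $c_0\in X\cap M$ with $\rho(x,c_0)<r/4$ and $q\in\qe_+$ with $\rho(x,c_0)<q<r/2$. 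Then $U(c_0,q)\in M$ by Proposition \ref{pModelBasisSpaces}, $x\in S\cap U(c_0,q)$ so $S\cap U(c_0,q)\neq\emptyset$, and the triangle inequality gives $U(c_0,q)\cap X_M\subset G$.

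Applying the Foran property of $\mathcal{F}$ in $X$ to the open set $U(c_0,q)$, the formula
\begin{flalign*}
 (*) & & \exists\,S^*\in\mathcal{F}\ \big(S^*\subset S\cap U(c_0,q)\ \wedge\ S\text{ is }\langle g\rangle\text{-porous at no point of }S^*\big) & &
\end{flalign*}
is true, with all free parameters ($\mathcal{F}$, $S$, $U(c_0,q)$, $X$, $\rho$, $g$) in $M$; absoluteness then produces some $S^*\in M$ satisfying it. Then $S^*\cap X_M\in\mathcal{F}_M$ and $S^*\cap X_M\subset(S\cap X_M)\cap G$, while for any $y\in S^*\cap X_M$ the set $S$ is not $\langle g\rangle$-porous at $y$ in $X$, so Proposition \ref{pGPorous} (applicable because $S\in M$ and $y\in X_M$) yields that $S\cap X_M$ is not $\langle g\rangle$-porous at $y$ in $X_M$. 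This is exactly axiom (ii) for $\mathcal{F}_M$.

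The only nontrivial obstacle is the construction of the rational ball $U(c_0,q)\in M$ inside $G$ in the second paragraph: without replacing the arbitrary open set $G\subset X_M$ by an object belonging to $M$, the key existential statement delivering $S^*$ would not have all its free parameters in $M$ and could not be subjected to absoluteness. Once this approximation is in place, the whole proof reduces to two applications of absoluteness (to locate $\mathcal{F}$ and then $S^*$ inside $M$) combined with the already established Proposition \ref{pGPorous}.
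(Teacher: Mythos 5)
Your proof is correct and follows essentially the same route as the paper's: absoluteness to pull a Foran system $\ef\in M$ containing $A$, the trace family $\{S\cap X_M: S\in\ef\cap M\}$ as the candidate, a rational ball centered at a point of $X\cap M$ to replace the arbitrary open set $G$ by an element of $M$, a second application of absoluteness to locate $S^*\in M$, and Proposition \ref{pGPorous} to transfer non-$\langle g\rangle$-porosity at points of $S^*\cap X_M$. The only (harmless) cosmetic difference is that you omit the paper's explicit restriction to those $S$ with $S\cap X_M\neq\emptyset$, correctly observing via Lemma \ref{lNonempty} that this condition is automatic.
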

\begin{proof}
 Let us fix a $(*)$-elementary submodel $M$ containing $X$ such that $A\in M$ and let the following formula be true
  \begin{flalign*}(*) & & \exists\ef\;(\ef\text{ is a Foran system for $\langle g\rangle$-porosity in $X$ such that $A\in\ef$}).& &\end{flalign*}
Notice that the preceding is a formula with free variables from $M$. Thus, by the absoluteness, there exists an $\ef\in M$ which is a Foran system for $\langle g\rangle$-porosity in $X$ with $A\in\ef$. Set
$$\ef':=\{S\cap X_M:\;S\in \ef\cap M,\; S\cap X_M\neq\emptyset\}.$$
First we notice that, by Lemma \ref{lNonempty}, the set $A\cap M$ is nonempty; it follows that $A\cap X_M\in\ef'$. Thus it suffices to establish that $\ef'$ is a Foran system for $\langle g\rangle$-porosity in $X_M$. Clearly, $\ef'$ is a nonempty family of nonempty $G_\delta$ subsets of $X_M$ so there only remains to be verified the second condition from the definition of Foran system. 

To that end, take some $S\in\ef\cap M$ such that $S\cap X_M\neq\emptyset$ (denote by $S_M$ the set $S\cap X_M$). Then take an arbitrary open set $G\subset X$ with $S_M\cap G\neq\emptyset$ and fix some $x\in S_M\cap G$ and $r\in\qe_+$ such that $U(x,r)\subset G$. Choose $x_0\in U(x,\frac r2)\cap M$. Then $x\in U(x_0,\frac r2)\subset U(x,r)$; thus, $S\cap  U(x_0,\frac r2)\neq\emptyset$. Using Propositions \ref{pModelBasis} and \ref{pModelBasisSpaces} we obtain that $S\cap U(x_0,\frac r2)\in M$. 

Now, as $\ef$ is a Foran system (in $X$), the following formula is true:
\begin{flalign*}(*) & & \exists S^*\in\ef: (S^*\subset S\cap U(x_0,\tfrac r2), S\text{ is }\langle g\rangle\text{-porous at no point of $S^*$})& &\end{flalign*}
By the absoluteness, there exists an $S^*\in M$ satisfying the formula above. Using Lemma \ref{lNonempty} we can see that $S^*\cap M\neq\emptyset$. Thus, $S^*$ is a member of $\ef'$, $S^*\cap X_M\subset S_M\cap U(x_0,\frac r2)\subset S_M\cap G$ and by Proposition \ref{pGPorous} above, $S_M$ is $\langle g\rangle$-porous at no point of $S^*\cap X_M$. Consequently, $A\cap X_M$ is, indeed, a member of a Foran system for $\langle g\rangle$-porosity in $X_M$ -- the system $\ef'$.
\end{proof}

\begin{remark}
Note that the last proof depends solely on our ability to separably reduce $\langle g \rangle$-porosity of a set at a point. It would work for any other type of porosity which fulfils this condition, e.g., the $(g)$-porosity (for the definition see \cite{1} or \cite{7}).
\end{remark}

Before proceeding to the last section where we use the propositions above, let us briefly turn our attention to the matter of lower porosity and formulate two related facts:

\begin{lemma}\label{lPitomustka}\pp\\ Let $\langle X,\rho \rangle$ be a metric space, $A\subset X$ and $d(\cdot,A):X\to\er$ the function defined by the formula $d(\cdot,A)(x):=d(x,A)$. Then whenever $M$ contains $X$ and $A$ then $d(\cdot,A)$ is an element of $M$. 
\end{lemma}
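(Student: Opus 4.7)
The plan is to apply Lemma \ref{lUniqueInM}, which guarantees that any set uniquely determined by an absolute formula with parameters from $M$ automatically lies in $M$. So I would look for a formula $\phi(f,X,\rho,A,\er)$ that picks out $d(\cdot,A)$ uniquely among all sets. A natural choice is: ``$f$ is a function, $\dom f = X$, $\rng f\subset \er$, and for every $x\in X$ we have $f(x)=\inf\{\rho(x,a) : a\in A\}$''. Both this formula and its existential closure $\exists f\,\phi(f,X,\rho,A,\er)$ should be added to the list of $(*)$-marked formulas in the proof below, so that their absoluteness for $M$ is guaranteed by the convention fixing a $(*)$-elementary submodel.

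Having done that, I would check that all the remaining free variables of $\phi$ are already in $M$. The set $A$ is in $M$ by hypothesis; $\er$, together with the order $<$ and the basic arithmetic operations (which are enough to express ``infimum'' in the language of set theory with real-number parameters), belongs to $M$ by the standing convention on $(*)$-elementary submodels. Finally, since $M$ contains the metric space $\langle X,\rho\rangle$, Proposition \ref{pModelBasis}(v) produces both $X\in M$ and $\rho\in M$. Because $d(\cdot,A)$ is manifestly the unique $f$ satisfying $\phi(f,X,\rho,A,\er)$, Lemma \ref{lUniqueInM} immediately yields $d(\cdot,A)\in M$.

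The only mild obstacle is bookkeeping: one must remember that $\rho$ (not just $X$) is a free variable of $\phi$ and therefore has to be separately located inside $M$, which is why the pair-extraction step via Proposition \ref{pModelBasis}(v) is essential. Beyond that the argument is a direct application of the elementary-submodel machinery introduced in Section~2, with no further substance required.
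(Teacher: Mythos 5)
Your proposal is correct and follows essentially the same route as the paper: the paper likewise fixes a $(*)$-elementary submodel and invokes Lemma \ref{lUniqueInM} together with the absoluteness of the formula asserting the existence of the (unique) function mapping each $x\in X$ to $\inf\{\rho(x,a);\;a\in A\}$. Your extra bookkeeping about extracting $\rho\in M$ from $\langle X,\rho\rangle\in M$ via Proposition \ref{pModelBasis}(v) is a correct detail that the paper leaves implicit.
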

\begin{proof}
 \letsMX such that $A\in M$. Then the lemma follows immediately from Lemma \ref{lUniqueInM} and from the absoluteness of the following formula and its subformulas \begin{flalign*}
  (*) & & \exists d(\cdot,A)\quad (d(\cdot,A)\text{ is a function which maps every } x\in X \text{ to the real number }& &\\
				       & & \inf\{\rho(x,a);\;a\in A\}). 
 \end{flalign*}
\end{proof}

Finally, we present the following proposition (its proof is contained in the proof of Proposition 4.10 in \cite{cuth}).

\begin{prop}\label{pLowerPoints}\pp\\ Let $\langle X,\rho \rangle$ be a metric space and $A\subset X$. Then whenever $M$ contains $X$ and $A$, it is true that for every $x\in A\cap M$
  $$A\text{ is not lower porous at $x$} \rightarrow A\cap X_M \text{ is not lower porous at } x \text{ in the space }X_M.$$ 
\end{prop}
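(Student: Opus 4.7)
My plan is, after fixing a $(*)$-elementary submodel $M$ containing $X$ and $A$ and a point $x\in A\cap M$, to reduce the conclusion to the following: for every $c\in\qe_+$ and every $n\in\en$, produce $R\in(0,1/n)$ with $\gamma_{X_M}(x,R,A\cap X_M)<cR$. This will give $\liminf_{R\to 0^+}\gamma_{X_M}(x,R,A\cap X_M)/R=0$, which is exactly the non-lower-porosity of $A\cap X_M$ at $x$ in $X_M$.

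The first step will be to use elementarity to pick a suitable $R\in M$. For $c\in\qe_+$ and $n\in\en$ (both in $M$ by the convention on $(*)$-submodels), the statement ``$\exists R\in\qe_+\cap(0,1/n)\ \exists q\in\qe_+$ with $q<cR/3$ and $\gamma(x,R,A)\le q$'' unfolds to an absolute formula---the condition $\gamma(x,R,A)\le q$ being $\forall r\in\qe_+\forall z\in X\,\bigl(r>q\Rightarrow\exists y\in X:\rho(z,y)<r\wedge(\rho(x,y)\ge R\vee y\in A)\bigr)$---with all parameters in $M$. It is true in $X$ by assumption, so elementarity supplies such $R,q\in M$.

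The second step is to verify $\gamma_{X_M}(x,R,A\cap X_M)\le q<cR$. Given $z\in X_M$ and a rational $r>q$, I would use density of $X\cap M$ in $X_M$ to pick $z_0\in X\cap M$ with $\rho(z,z_0)<r-q$, and then pick $r_1\in\qe_+\cap M$ with $q<r_1<r-\rho(z,z_0)$. Since $\gamma(x,R,A)\le q<r_1$, we have $U(z_0,r_1)\not\subset U(x,R)\setminus A$, so some $y\in X$ satisfies $\rho(z_0,y)<r_1$ together with $y\in A$ or $\rho(x,y)\ge R$. The parameters $z_0,r_1,x,R,A$ all belong to $M$, so elementarity lets me take $y\in M\cap X\subset X_M$. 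Then $\rho(z,y)<r$, so $y\in U_{X_M}(z,r)$, and $y$ witnesses either that $U_{X_M}(z,r)$ meets $A\cap X_M$ or that $U_{X_M}(z,r)\not\subset U_{X_M}(x,R)$; in either case $U_{X_M}(z,r)\not\subset U_{X_M}(x,R)\setminus(A\cap X_M)$, as desired.

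The main conceptual obstacle is that \emph{a priori} $\gamma_{X_M}$ can exceed $\gamma$: the trace $U(z,r)\cap X_M$ may miss $A$ without the full ball $U(z,r)$ missing it, and may lie inside $U(x,R)$ without $U(z,r)$ doing so. Elementarity handles exactly this pathology---any point of $X$ witnessing ``$U(z_0,r_1)$ does not lie in $U(x,R)\setminus A$'' may be found already inside $M$, hence inside $X_M$, so an obstruction to porosity in $X$ becomes an obstruction to porosity in $X_M$.
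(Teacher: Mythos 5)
Your argument is correct. Note that the paper itself does not prove this proposition --- it only points to the proof of Proposition 4.10 in \cite{cuth} --- so there is no in-paper proof to compare against; your reconstruction uses exactly the transfer technique the paper employs elsewhere (cf.\ the proof of Proposition \ref{pGPorous}): encode the relevant quantitative condition with rational parameters, observe that all parameters lie in $M$, and use absoluteness to pull a witness of the failure of porosity into $X\cap M\subset X_M$. The one step you gloss over is the claim that non-lower-porosity at $x$ directly yields a \emph{rational} $R\in(0,1/n)$ with $\gamma(x,R,A)\le q<cR/3$: the $\liminf$ condition only produces real radii, so you need the monotonicity of $R\mapsto\gamma(x,R,A)$ to replace a real $R_0$ satisfying $\gamma(x,R_0,A)<(c/6)R_0$ by a rational $R\in(R_0/2,R_0)$, for which $\gamma(x,R,A)\le\gamma(x,R_0,A)<(c/6)R_0<cR/3$; the factor of $3$ you built in leaves exactly the slack needed, so this is a one-line repair rather than a gap. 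The remainder --- in particular extracting the obstruction point $y\in X\cap M$ from the true existential formula with parameters $z_0,r_1,x,R,A\in M$, and concluding $\gamma_{X_M}(x,R,A\cap X_M)\le q<cR$ for a sequence of radii tending to $0$ --- is sound.
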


Note, that this is exactly the moment, where we were unable to reduce the property of not being lower porous at a point. However, thanks to Proposition \ref{cLowerIff}, this proposition will be sufficient.

\section{Main results}

In the main part of this article we show that the set properties ``to be $\sigma$-upper porous'' and ``to be $\sigma$-lower porous'' are separably determined. We formulate the related theorems in the language of elementary submodels (which is useful when we want to combine several results concerning elementary submodels together). However, we also formulate a corollary of these results in such a setting that no knowledge of elementary submodels is required (see Theorem \ref{tUpperPorous}). 

Next, we show that these results may be useful for proving that some results concerning separable spaces hold in a nonseparable setting as well. This is demonstrated on Theorem \ref{tApplication}.

First, let us show that $\sigma$-upper porosity is a separably determined notion.

\begin{thm}\label{tUpPorous}\pp\\ Let $\langle X,\rho\rangle$ be a topologically complete metric space, $g$ a porosity function and $A\subset X$ a Suslin set. Then whenever $M$ contains $X$ and $A$, it is true that
$$A\text{ is $\sigma$-upper porous in } X \iff A\cap X_M\text{ is $\sigma$-upper porous in }X_M.$$
Moreover, if $A$ is $G_\delta$ and M contains also $g$, then
$$A\text{ is not $\sigma$-$\langle g \rangle$-porous in } X \rightarrow A\cap X_M\text{ is not $\sigma$-$\langle g \rangle$-porous in }X_M.$$
\end{thm}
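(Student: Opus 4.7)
My plan is to: (i) prove the moreover statement via the Foran-system machinery and absoluteness; (ii) derive the $(\Leftarrow)$ direction of the main equivalence from it by reducing the Suslin case to the $G_\delta$ case through Theorem~\ref{inscrupper} and Lemma~\ref{3iffNIC}; and (iii) prove the $(\Rightarrow)$ direction by combining absoluteness with a density argument based on Lemma~\ref{lDense}. In the standard $(*)$-style preamble I will add, to the absoluteness list for $M$, the formulas ``there exists a Foran system for $\langle g\rangle$-porosity in $X$ containing a subset of $A$'', ``there exists a closed non-$\sigma$-upper porous $F\subseteq A$ in $X$'', ``$A$ is $\sigma$-upper porous (equivalently $\sigma$-3-porous) in $X$'', and the pointwise formula ``$B$ is 3-porous at $x$''.

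For the moreover, fix a $(*)$-elementary $M$ containing $X$, $A$, and $g$ and assume $A$ is $G_\delta$ and not $\sigma$-$\langle g\rangle$-porous in $X$. Lemma~\ref{Foran Converse} produces a Foran system $\mathcal{F}$ for $\langle g\rangle$-porosity in $X$ together with some $S\in\mathcal{F}$, $S\subseteq A$; the corresponding existential formula has all free parameters in $M$, so absoluteness places such $\mathcal{F}$ and $S$ inside $M$. Proposition~\ref{FS redukce} then tells me that $S\cap X_M$ is a member of a Foran system for $\langle g\rangle$-porosity in $X_M$, and the Foran Lemma (Proposition~\ref{FL}) shows that $S\cap X_M$, and hence $A\cap X_M\supseteq S\cap X_M$, is not $\sigma$-$\langle g\rangle$-porous in $X_M$.

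The $(\Leftarrow)$ direction of the main equivalence now follows contrapositively: if the Suslin set $A$ is not $\sigma$-upper porous in $X$, Lemma~\ref{3iffNIC} converts this to ``not $\sigma$-$\langle g\rangle$-porous for $g(x)=3x$'', and Theorem~\ref{inscrupper} supplies a closed (hence $G_\delta$) $F\subseteq A$ which is still not $\sigma$-upper porous in $X$; absoluteness places $F\in M$, and $g(x)=3x$ is parameter-free and so is automatically in $M$. Applying the moreover to $F$ and this $g$, followed by Lemma~\ref{3iffNIC} inside $X_M$, shows that $F\cap X_M$---and therefore $A\cap X_M\supseteq F\cap X_M$---is not $\sigma$-upper porous in $X_M$.

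For $(\Rightarrow)$, assume $A$ is $\sigma$-upper porous in $X$. By Lemma~\ref{3iffNIC} and absoluteness I can write $A=\bigcup_n B_n$ with each $B_n\in M$ being 3-porous in $X$. For $x\in B_n\cap M$, absoluteness of ``$B_n$ is 3-porous at $x$'' delivers witnesses $(c_j,r_j)_j\in M$ with centres in $X\cap M\subseteq X_M$ and balls $U(c_j,r_j)$ disjoint from $B_n$, immediately giving 3-porosity of $B_n\cap X_M$ at $x$ in $X_M$. For a general $y\in B_n\cap X_M$, Lemma~\ref{lDense} (applied with $D=B=B_n$) yields $y_k\in B_n\cap M$ with $y_k\to y$, and a diagonal choice of 3-porosity witnesses at the $y_k$---selecting the index $j(k)$ so that $\rho(c_{k,j(k)},y_k)$ is of order $\rho(y_k,y)$---produces ball-centres $c_k^\ast\in X_M$ converging to $y$, balls $U(c_k^\ast,r_k^\ast)$ disjoint from $B_n$, and a lower bound on $r_k^\ast/(\rho(c_k^\ast,y)+r_k^\ast)$ by a fixed positive constant. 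This yields upper porosity of $B_n\cap X_M$ at $y$ in $X_M$, so every $B_n\cap X_M$ is upper porous in $X_M$ and $A\cap X_M$ is $\sigma$-upper porous in $X_M$. The principal obstacle is exactly this diagonal extension: absoluteness supplies porosity witnesses only at the dense subset $B_n\cap M$, and the passage to arbitrary $y\in B_n\cap X_M$ relies on the \emph{uniform} constant ``$3$'' from 3-porosity (via Lemma~\ref{3iffNIC}) in order to stop the porosity constant collapsing in the limit---this is why the detour through 3-porous pieces, rather than general upper porous ones, is essential.
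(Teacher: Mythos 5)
Your treatment of the ``moreover'' part and of the implication from right to left is correct and is essentially the paper's own argument: absoluteness of ``there is a member of a Foran system contained in $A$'' (via Lemma~\ref{Foran Converse}), then Proposition~\ref{FS redukce} and the Foran Lemma~\ref{FL}; and for the equivalence, the contrapositive reduction of the Suslin case to a closed set via Theorem~\ref{inscrupper} and the translation between $\sigma$-upper porosity and $\sigma$-$3$-porosity from Lemma~\ref{3iffNIC}. (Minor quibble: ``$g(x)=3x$ is parameter-free, hence automatically in $M$'' is not quite right --- it lands in $M$ by Lemma~\ref{lUniqueInM} applied to its defining formula, whose parameters $\er$, $\cdot$ are in $M$ by the $(*)$-convention; the paper does exactly this.)

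The implication from left to right, however, has a genuine gap. You prove $3$-porosity of $B_n\cap X_M$ at the points of $B_n\cap M$ (this step is fine) and then try to propagate porosity to an arbitrary $y\in B_n\cap X_M$ by approximating $y$ with $y_k\in B_n\cap M$ and ``selecting the index $j(k)$ so that $\rho(c_{k,j(k)},y_k)$ is of order $\rho(y_k,y)$''. No such selection is available: $3$-porosity at $y_k$ only supplies empty balls at \emph{some} sequence of scales $\rho(c_{k,j},y_k)\to 0$, and nothing forces that sequence to hit the scale $\delta_k=\rho(y_k,y)$ --- it may jump straight from values far above $\delta_k$ to values far below it. If the chosen witness has $\rho(c_{k,j(k)},y_k)\ll\delta_k$, then $r_{k,j(k)}\le\rho(c_{k,j(k)},y_k)\ll\delta_k$ while any ball $U(y,R)$ containing $U(c_{k,j(k)},r_{k,j(k)})$ must have $R\ge\delta_k-2r_{k,j(k)}\approx\delta_k$, so the ratio $r_{k,j(k)}/R$ collapses and no positive porosity constant at $y$ is obtained. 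Upper porosity at every point of a dense subset simply does not transfer to the remaining points by a limiting argument of this kind, which is precisely why the paper does not argue this way: it obtains the left-to-right implication from Lemma~\ref{lPitomustka} together with [\ref{cuth}, Corollary 4.13], i.e., through the distance function $d(\cdot,A)\in M$ and the characterization of upper porosity at $x$ by $\limsup_{y\to x} d(y,A)/\rho(y,x)>0$. In that approach the base point $x\in X_M$ stays fixed and one perturbs the \emph{test points} $y$ into $X\cap M$ using absoluteness and the $1$-Lipschitz property of $d(\cdot,A)$, which costs only a controlled factor in the porosity constant; that uniformity is exactly what your diagonal argument lacks. To repair your proof you would need either to reprove that distance-function reduction or to cite it as the paper does.
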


\begin{proof}\letsMX such that $g,A\in M$. Assume the set $A$ is of the type $G_\delta$; we shall prove the second part of the proposition first. 
Due to Lemma \ref{Foran Converse} and the absoluteness of the formula (and its subformulas)
\begin{flalign*}
  (*) & & \exists B\quad (B\subset A\text{ and $B$ is a member of a Foran system for $\langle g \rangle$-porosity}), & & 
 \end{flalign*}
we can assume that the set $A$ is a member of a Foran system $\ef$ for $\langle g \rangle$-porosity. Hence the set $A\cap X_M$ is a member of a Foran system $\ef'$ for $\langle g \rangle$-porosity in $X_M$ (Proposition \ref{FS redukce}) and thus is not $\sigma$-$\langle g \rangle$-porous in $X_M$ (Proposition \ref{FL}).

The implication from the left to the right for $\sigma$-upper porosity follows immediately from Lemma \ref{lPitomustka} and [\ref{cuth}, Corollary 4.13]. 

We shall prove the other implication indirectly; owing to Theorem \ref{inscrupper} we can assume that $A$ is $G_\delta$ again (even closed). The result now follows from the already proved part and Lemma \ref{3iffNIC}, using the absoluteness of the formula (and its subformulas)
\begin{flalign*}(*) & & \exists g \quad(g:\er\to\er\text{ is a function such that for all }x\in\er\text{ is }g(x)=3x). & &\qedhere \end{flalign*}
\end{proof}

\begin{remark}
It is not known to the authors whether the other implication for $\sigma$-$\langle g\rangle$-porosity holds. However, under the assumptions of the preceding theorem, it is true that whenever $A$ is $\sigma$-$\langle g \rangle$-porous then $A\cap X_M$ is $\sigma$-$\langle dg \rangle$-porous in $X_M$ for any $d>2$. This may be established in the following way:

First, using the ideas presented in \cite{cuth} (mainly Proposition 4.12 and Corrolary 4.13), we are able to see that if $A$ is $\sigma$-$(g,c)$-porous in $X$ (where $c>0$; the definition is natural -- see \cite{1}), then $A\cap X_M$ is $\sigma$-$(g,c/2)$-porous in the space $X_M$. 

Now let us assume the set $A$ is $\sigma$-$\langle g \rangle$-porous in $X$. Then [\ref{1}, Lemma 3.1(ii)] implies it is $\sigma$-$(g,1/2)$-porous in $X$ and thus $A\cap X_M$ is $\sigma$-$(g,1/4)$-porous in the space $X_M$. In the nontrivial case when there exists a $\delta>0$ such that $g(x)>x$ for all $x\in (0,\delta)$ (if that is not the case, then the notion of $\langle g \rangle$-porosity is usually not very interesting) it is not difficult to prove that $g$ satisfies the assumption from [\ref{1}, Proposition 4.4]. Thus $A\cap X_M$ is $\sigma$-$(g,c)$-porous for any $c\in (0,1/2)$. To pass back to $\langle \cdot \rangle$-porosity, we use a slightly refined version of [\ref{1}, Lemma 3.1(i)] which for any $d>1$ states that $(f,d)$-porosity of a given set $N$ at a given point $x$ implies  $\langle f \rangle$-porosity of $N$ at $x$. We easily obtain that the set $A\cap X_M$ is $\sigma$-$\langle d g \rangle$-porous for any $d>2$.

Moreover, under the additional assumption that there exists a $d>2$ and a $\delta>0$ such that $g(x) > dx$ for any $x\in (0,\delta)$, we are able to prove (similarly as above) that whenever $A$ is $\sigma$-$\langle g \rangle$-porous then $A\cap X_M$ is $\sigma$-$\langle g \rangle$-porous in $X_M$.
\end{remark}
\begin{remark}
Under the assumptions of Theorem \ref{tUpPorous} the following holds: If $g$ is a porosity function such that for some $c>0$ there is a $\delta>0$ such that $cg(x)>x$ for all $x\in (0,\delta)$, then 
$$A\text{ is $\sigma$-$(g)$-porous in } X \iff A\cap X_M\text{ is $\sigma$-$(g)$-porous in }X_M.$$
This can be established as follows: Let $d=12c$ and let $A$ be non-$\sigma$-$(g)$-porous in $X$. Then it is non-$\sigma$-$(dg,1)$-porous and thus $A$ is non-$\sigma$-$\left\langle \frac{d}{2} g \right\rangle$-porous in $X$. Theorem \ref{tUpPorous} asserts that the same holds also for $A\cap X_M$ in $X_M$. Hence, $A\cap X_M$ is non-$\sigma$-$\left(\frac{d}{2}g,2\right)$-porous ([\ref{1}, Lemma 3.1(i)]), i.e., it is non $\sigma$-$\left(\frac{d}{12}g,\frac{1}{3}\right)$-porous (in $X_M$). Now, since the function $\frac{d}{12}g=cg$ satisfies the assumption of [\ref{1}, Proposition 4.4], we obtain that $A\cap X_M$ is not $\sigma$-$(g)$-porous in $X_M$.

For the proof of the other implication assume that the set $A$ is $\sigma$-$(g)$-porous. It is easy to see that there exist $(g,c_n)$-porous sets $A_n$ (with $c_n>0$ for each $n\in \en$) such that $A=\bigcup_{n=1}^\infty A_n$. In the same way as in the previous remark we obtain that $A_n\cap X_M$ is $\left( 	g,\frac{c_n}{2} \right)$-porous in $X_M$ for each $n$. Hence, $A\cap X_M$ is $\sigma$-$g$-porous in $X_M$.
\end{remark}


We shall now turn our attention to $\sigma$-lower porosity and show it is separably determined.

\begin{thm}\label{tLoPorous}\pp\\ Let $\langle X,\rho \rangle$ be a topologically complete metric space and let $A\subset X$ be a Suslin set. Then whenever $M$ contains $X$ and $A$, it is true that 
  $$A\text{ is $\sigma$-lower porous in }X \iff A\cap X_M\text{ is $\sigma$-lower porous in }X_M.$$ 
\end{thm}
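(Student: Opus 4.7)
My plan is to establish the two implications separately. The forward implication will follow by an absoluteness and uniform-decomposition argument, in direct analogy with the corresponding step in the proof of Theorem \ref{tUpPorous}. The reverse implication is the conceptually novel step and is precisely what Proposition \ref{cLowerIff} was designed to enable: it will let us translate the failure of $\sigma$-lower porosity into a closed witnessing set with a dense subset of non-porosity points that can be pulled into $M$ by absoluteness and then restricted to $X_M$.

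Fix a $(*)$-elementary submodel $M$ containing $X$ and $A$. For the forward direction, assume $A$ is $\sigma$-lower porous in $X$. By Lemma \ref{lPitomustka} we have $d(\cdot,A)\in M$, and one decomposes $A$ into countably many uniformly $(c,R)$-lower porous pieces. Pulling such a decomposition into $M$ by absoluteness (in the same spirit as the forward implication in the proof of Theorem \ref{tUpPorous}) and verifying piece by piece that each $A_n\cap X_M$ is $(c_n',R_n')$-lower porous in $X_M$ for suitable positive constants (with a controllable loss) yields that $A\cap X_M$ is $\sigma$-lower porous in $X_M$.

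For the reverse direction, argue contrapositively. Assume $A$ is not $\sigma$-lower porous in $X$. By Proposition \ref{cLowerIff} there exist a closed set $F\subseteq A$ and a set $D\subseteq F$ dense in $F$ such that $F$ is lower porous at no point of $D$. The existence of such a pair is a formula whose free variables $X,\rho,A$ all lie in $M$; marking it with $(*)$ and invoking absoluteness, we may pick $F,D\in M$. Set $F':=F\cap X_M$ and $D':=D\cap M$. Then $F'$ is closed in the topologically complete space $X_M$, $F'\subseteq A\cap X_M$, and Lemma \ref{lDense} gives that $D'$ is dense in $F'$. For every $x\in D'\subseteq F\cap M$, Proposition \ref{pLowerPoints} guarantees that $F'$ is not lower porous at $x$ in $X_M$. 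Applying the implication (ii)$\Rightarrow$(i) of Proposition \ref{cLowerIff} to the closed (hence Suslin) set $F'$ in $X_M$ shows that $F'$ is not $\sigma$-lower porous in $X_M$. Since subsets of $\sigma$-lower porous sets are trivially $\sigma$-lower porous, its superset $A\cap X_M$ cannot be $\sigma$-lower porous in $X_M$ either.

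The main obstacle is the forward implication: the $\liminf$ in the definition of lower porosity forbids a naive transfer of a single witnessing ball, which is the very reason Proposition \ref{pLowerPoints} is available only in one direction. One is forced to pass through uniform $(c,R)$-decompositions so that the porosity constants survive (up to a controllable loss) the intersection with $X_M$. The reverse direction, by contrast, is a clean consequence of Proposition \ref{cLowerIff}, Proposition \ref{pLowerPoints} and Lemma \ref{lDense}; indeed, the role of Proposition \ref{cLowerIff} in the paper is precisely to reduce this step to a routine restriction-to-$M$ argument.
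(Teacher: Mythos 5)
Your proposal is correct and follows essentially the same route as the paper: the backward implication is exactly the paper's argument (Proposition \ref{cLowerIff} to produce $F$ and $D$, absoluteness to pull them into $M$, Lemma \ref{lDense} for density of $D\cap M$, Proposition \ref{pLowerPoints} at the points of $D\cap M$, then Proposition \ref{cLowerIff} again in $X_M$), and the forward implication sketches the uniform-decomposition argument that the paper simply delegates to [\ref{cuth}, Corollary 4.13] combined with Lemma \ref{lPitomustka}. The only detail you omit is verifying $F\cap X_M\neq\emptyset$ before applying Proposition \ref{cLowerIff} in $X_M$ (the empty set is $\sigma$-lower porous, so this matters), which the paper obtains immediately from Lemma \ref{lNonempty}.
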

\begin{proof}
 \letsMX such that $A\in M$. Then the implication from the left to the right follows from [\ref{cuth}, Corollary 4.13] and from Lemma \ref{lPitomustka}.
 
To prove the opposite implication we use Proposition \ref{cLowerIff}. Let us assume that the set $A$ is not $\sigma$-lower porous in $X$. Then \begin{flalign*}
   (*) & & \exists F\;\;\exists D \quad(\text{$F\subset A$ is a nonempty closed set such that $D\subset F$ is dense in $F$}& &\\
			& & \text{and $F$ is not lower porous at any point of $D$}).
 \end{flalign*}
 By the absoluteness of this formula (and its subformulas) above, we are able to find sets $F, D\in M$ satisfying the conditions above. Using Lemma \ref{lNonempty} and Lemma \ref{lDense} we can see that $F\cap M\neq\emptyset$ and $D\cap M$ is dense in $F\cap X_M$. Moreover, by Proposition \ref{pLowerPoints}, $F\cap X_M$ is not lower porous at any point of $D\cap M$. Thus, from Proposition \ref{cLowerIff} it follows that the set $A\cap X_M$ is not $\sigma$-lower porous in the space $X_M$.
\end{proof}

Theorem \ref{tUpperPorous} from the introduction is just an easy consequence of Theorem \ref{tUpPorous}, Theorem \ref{tLoPorous} and Proposition \ref{pModelBasisSpaces} since Convention \ref{conventionM} allows us to combine these three results; by doing that we obtain a theorem in the setting of Banach spaces which concerns both types of porosity.
In a similar way, Theorem \ref{tDifPorous} follows from the Theorem \ref{tUpPorous}, Theorem 5.7 in \cite{cuth} and Proposition \ref{pModelBasisSpaces} (because the set of the points where a function is Fr\'echet differentiable is a $F_{\sigma\delta}$ set - see for example [\ref{cuth}, Theorem 5.8])
).

Finally, we give the following application of our results. In \cite{5} is proved the following theorem (we use the more common terminology from \cite{kruger}).

\begin{defin}
 Let $\langle X,\left\| \cdot \right\| \rangle$  be a Banach space and let $f$ be a real function defined on $X$. We say that $f$ is Fr\'echet superdifferentiable at $x\in X$ \ifff there exists a $x^*\in X^*$ such that 
 $$\limsup_{h\to 0}\frac{(f(x+h) - f(x) - x^*(h))}{\|h\|}\leq 0.$$
\end{defin}

\begin{thm}[{[\ref{ZAJThm}, Theorem 2]}]\label{tSepDual}
Let $\langle X,\left\| \cdot \right\| \rangle$ be a Banach space with separable dual space and let $G\subset X$ be an open set. Let $f$ be a Lipschitz function on $G$ and let $A$ be the set of all the points $x\in G$ such that $f$ is Fr\'echet superdifferentiable at $x$ and $f$ is not Fr\'echet differentiable at $x$. Then $A$ is $\sigma$-upper porous.
\end{thm}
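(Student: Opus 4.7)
The strategy is to exhibit $A$ as a countable union of upper porous sets, parameterised by (i) an approximating supergradient drawn from a countable dense subset of $X^*$, (ii) a rational lower bound on the ``dip'' rate witnessing non-differentiability, and (iii) a scale on which both the supergradient estimate and the locality constraints hold. Let $L$ be a Lipschitz constant of $f$; since $X^*$ is separable, fix a countable dense set $\{x_n^*\}_{n\in\en}$ in $X^*$. A direct estimate shows every Fr\'echet supergradient of an $L$-Lipschitz function has norm at most $L$, so only the indices $n$ with $\|x_n^*\|\leq L+1$ will matter.

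For $n,k,m\in\en$ let $A_{n,k,m}$ be the set of those $x\in G$ with $U(x,3/m)\subset G$ satisfying the supergradient inequality
$$f(x+h) - f(x) - x_n^*(h) \leq \|h\|/(4k) \quad\text{for all } h \text{ with } 0<\|h\|\leq 1/m,$$
and the dip condition: for every $\eta\in(0,1/m]$ there is some $h$ with $0<\|h\|\leq\eta$ and
$$f(x+h) - f(x) - x_n^*(h) \leq -\|h\|/k.$$
To see $A\subset\bigcup_{n,k,m}A_{n,k,m}$, take $x\in A$ with supergradient $x^*$ and set $c := -\liminf_{h\to 0}(f(x+h)-f(x)-x^*(h))/\|h\|$. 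The supergradient condition forces the corresponding $\limsup$ to be $\leq 0$; since $x$ is not Fr\'echet differentiable, one cannot also have $\liminf\geq 0$ (else the limit would be $0$ and $x^*$ would be the Fr\'echet derivative), so $c>0$. Choose $k$ with $1/k<c$, then $n$ with $\|x_n^*-x^*\|$ small compared with $1/k$, and finally $m$ so large that $U(x,3/m)\subset G$ and the supergradient tolerance for $x^*$ on $\|h\|\leq 1/m$ is absorbed into $1/(4k)$ after the switch $x^*\to x_n^*$. A routine verification then gives $x\in A_{n,k,m}$.

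It remains to check each $A_{n,k,m}$ is upper porous. Fix $x\in A_{n,k,m}$ and an arbitrary $R_0\in(0,1/m]$; by the dip condition with $\eta=R_0/2$, pick $h$ with $0<\|h\|\leq R_0/2$ satisfying the dip estimate and set $y=x+h$, $R=2\|h\|\leq R_0$. Suppose $r\in(0,1)$ and $z\in U(y,r\|h\|)\cap A_{n,k,m}$. Then $\|x-z\|\leq (1+r)\|h\|\leq 1/m$, so the supergradient inequality applied at $z$ in the direction $x-z$ yields $f(z)\geq f(x)+x_n^*(z-x) - \|z-x\|/(4k)$. Combining with the Lipschitz lower bound $f(y)\geq f(z) - L\|z-y\|$ and the dip estimate $f(y)\leq f(x)+x_n^*(h) - \|h\|/k$ at $x$ gives
$$\frac{\|h\|}{k} \;\leq\; (L+\|x_n^*\|)\|z-y\| + \frac{\|z-x\|}{4k} \;\leq\; (L+\|x_n^*\|)\,r\|h\| + \frac{(1+r)\|h\|}{4k}.$$
Dividing by $\|h\|$ and rearranging forces $r\geq 3/(4k(L+\|x_n^*\|)+1)$. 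Hence, for any $r_0\in(0,1)$ strictly less than this threshold, $U(y,r_0\|h\|)\cap A_{n,k,m}=\emptyset$; since $U(y,r_0\|h\|)\subset U(x,R)$, we get $\gamma(x,R,A_{n,k,m})/R\geq r_0/2$ at arbitrarily small $R$, which is upper porosity at $x$.

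The main delicate point is the bookkeeping in the decomposition: the tolerance $1/(4k)$ in the supergradient estimate must be strictly smaller than the dip constant $1/k$, since the final contradiction rests on the positivity of $1/k - 1/(4k) = 3/(4k)$; a weaker ratio leaves no slack in the key inequality. A secondary point is ensuring every $x\in A$ lies in some $A_{n,k,m}$: one has to calibrate the density of $\{x_n^*\}$ in $X^*$ against $c$ and $1/k$ so that both the supergradient and the dip estimates survive the replacement $x^*\to x_n^*$. Once this calibration is in place, the argument is essentially a direct comparison of the two defining inequalities of $A_{n,k,m}$ with the Lipschitz bound.
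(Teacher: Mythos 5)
This theorem is not proved in the paper at all: it is imported verbatim from Zaj\'{\i}\v{c}ek's paper ([\ref{ZAJThm}, Theorem 2]) and used as a black box in the proof of Theorem \ref{tApplication}, so there is no internal argument to compare yours against. Your proof is a correct, self-contained argument in the spirit of the original Ekeland--Lebourg-type proofs: the decomposition of $A$ into the sets $A_{n,k,m}$ (countable via separability of $X^*$, a rational dip rate $1/k$, and a scale $1/m$) is sound, the covering argument works once the approximating functional $x_n^*$ is chosen within $\min\{1/(8k),\,c'-1/k\}$ of the true supergradient, and the porosity estimate is right: combining the dip at $x$, the supergradient inequality at a hypothetical nearby point $z\in A_{n,k,m}$ applied in the direction $x-z$, and the Lipschitz bound does force $r\geq 3/(4k(L+\|x_n^*\|)+1)$, yielding a hole of fixed relative radius in $U(x,R)$ at arbitrarily small $R$. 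The only point worth flagging is cosmetic: the porosity constant obtained depends on $n$ and $k$, which is harmless for $\sigma$-upper porosity since each $A_{n,k,m}$ only needs to be upper porous with its own constant.
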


Using the method of elementary submodels, it is now easy to extend the validity of this result to general Asplund spaces.

\begin{thm}\label{tApplication}
 Let $\langle X,\left\| \cdot \right\| \rangle$ be an Asplund space and let $G\subset X$ be an open set. Let $f$ be a Lipschitz function on $G$ and let $A$ be the set of all the points $x\in G$ such that $f$ is Fr\'echet superdifferentiable at $x$ and $f$ is not Fr\'echet differentiable at $x$. Then $A$ is $\sigma$-upper porous.
\end{thm}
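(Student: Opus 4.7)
The plan is to combine the separable reduction of $\sigma$-upper porosity given by Theorem \ref{tUpPorous} with the known separable-dual case (Theorem \ref{tSepDual}) via the method of elementary submodels, exploiting the fact that every separable subspace of an Asplund space has separable dual.

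First I would verify that $A$ is a Suslin (in fact Borel) set, so that Theorem \ref{tUpPorous} is applicable: the set of points of Fr\'echet differentiability is $F_{\sigma\delta}$ (cf.\ [\ref{cuth}, Theorem 5.8]), and an analogous Borel description holds for points of Fr\'echet superdifferentiability, whence $A$ is Borel. Then I would fix a $(*)$-elementary submodel $M$ containing $X$, $G$, $f$ and $A$. By Proposition \ref{pModelBasisSpaces}(ii), $X_M$ is a closed separable subspace of $X$, and since $X$ is Asplund, $X_M^*$ is separable. The restriction $f\upharpoonright_{G\cap X_M}$ is Lipschitz on the open set $G\cap X_M\subset X_M$, so Theorem \ref{tSepDual} applies inside $X_M$ and shows that the set $A'\subset G\cap X_M$ of those $x\in G\cap X_M$ at which $f\upharpoonright_{G\cap X_M}$ is Fr\'echet superdifferentiable but not Fr\'echet differentiable (all computed in $X_M$) is $\sigma$-upper porous in $X_M$.

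The key step -- and the main obstacle -- is to check that $A\cap X_M=A'$. For every $x\in G\cap X_M$, [\ref{cuth}, Theorem 5.7] (combined with Remark \ref{rCombine}) yields that $f$ is Fr\'echet differentiable at $x$ if and only if $f\upharpoonright_{G\cap X_M}$ is Fr\'echet differentiable at $x$ in $X_M$. I would establish the analogous equivalence for Fr\'echet superdifferentiability by the same elementary-submodel argument, with the two-sided limit replaced by a one-sided $\limsup$ and using that $X\cap M$ is dense in $X_M$ (Lemma \ref{lDense} type reasoning); this adaptation is the step requiring the most care, since the definition of superdifferentiability involves an inequality rather than an exact derivative, and one must verify that a directional estimate valid along $X\cap M$ transfers to all directions of $X$ via Lipschitz continuity of $f$ and an approximation argument. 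Once both equivalences are in place, $A\cap X_M = A'$ is immediate, so $A\cap X_M$ is $\sigma$-upper porous in $X_M$, and a final application of Theorem \ref{tUpPorous} concludes that $A$ is $\sigma$-upper porous in $X$.
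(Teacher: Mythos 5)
Your overall strategy is the same as the paper's: reduce to a separable subspace $X_M$ via Theorem \ref{tUpPorous} and [\ref{cuth}, Theorem 5.7], use that $X_M^*$ is separable because $X$ is Asplund, and invoke Theorem \ref{tSepDual} inside $X_M$. But you have made the key step harder than it needs to be, and the extra work you flag as ``requiring the most care'' is precisely the part that is both unnecessary and doubtful. You aim to prove the \emph{equality} $A\cap X_M=A'$, which forces you to separably reduce Fr\'echet superdifferentiability in both directions. The direction from $X_M$ back to $X$ (a functional on $X_M$ witnessing superdifferentiability of the restriction yielding one on $X$ for $f$ itself) is not obtained by the approximation argument you sketch: $X_M$ is a closed proper subspace, so an arbitrary increment $h\in X$ cannot be approximated by $h'\in X\cap M$ with $\|h-h'\|$ small relative to $\|h\|$, and Lipschitzness of $f$ does not bridge that. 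As written, this step is a genuine gap.

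The repair is to observe that you only need the \emph{inclusion} $A\cap X_M\subset A'$, since any subset of a $\sigma$-upper porous set is $\sigma$-upper porous. That inclusion needs only the easy direction of each equivalence: if $f$ is Fr\'echet superdifferentiable at $x\in X_M$ with functional $x^*\in X^*$, then $x^*\!\upharpoonright_{X_M}$ witnesses superdifferentiability of $f\upharpoonright_{X_M}$ at $x$ (the $\limsup$ over $h\in X_M$ is majorized by the $\limsup$ over $h\in X$), and non-differentiability of $f$ at $x$ passes to $f\upharpoonright_{X_M}$ by the equality $D(f)\cap X_M=D(f\upharpoonright_{X_M})$ from [\ref{cuth}, Theorem 5.7]. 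Then $A'$ is $\sigma$-upper porous in $X_M$ by Theorem \ref{tSepDual}, hence so is $A\cap X_M$, and Theorem \ref{tUpPorous} finishes the argument. This is exactly how the paper proceeds. (A minor additional point: the paper applies Theorem \ref{tUpPorous} directly to $A$, so you do still need $A$ to be Suslin, which your first paragraph addresses; that part of your plan is fine.)
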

\begin{proof}
 Let us denote by $D(f)$ the set of points where $f$ is Fr\'echet differentiable and by $S(f)$ the set of points where $f$ is Fr\'echet superdifferentiable. Then, using Theorem \ref{tUpPorous}, Proposition \ref{pModelBasisSpaces} and  [\ref{cuth}, Theorem 5.7], take an elementary submodel $M$ satisfying:
 \begin{itemize}
  \item $X_M$ is a separable subspace of $X$,
  \item $D(f)\cap X_M = D(f\upharpoonright_{X_M})$,
  \item $A$ is $\sigma$-upper porous \ifff $A \cap X_M$ is $\sigma$-upper porous in the space $X_M$.
 \end{itemize}
 Note that $A\cap X_M \subset \{x\in X_M;x\in S(f\upharpoonright_{X_M})\setminus D(f\upharpoonright_{X_M})\}$ and that the set on the right side is $\sigma$-upper porous (because $X_M$ is a separable space with separable dual); thus the set $A$ is $\sigma$-upper porous.
\end{proof}

\end{document}